\newtheorem{example}{Example}[section]
\newtheorem{algorithm}{Algorithm}[section]
\renewcommand{\theequation}{\arabic{section}.\arabic{equation}}
\newcommand{\pa}{\partial}
\newcommand{\na}{\nabla}
\newcommand{\Om}{\Omega}
\newcommand{\de}{\delta}
\newcommand{\Lam}{\Lambda}
\newcommand{\lam}{\lambda}
\newcommand{\vep}{\varepsilon}
\newcommand{\lj}{|{\hskip -1pt} \|}
\newcommand{\rj}{|{\hskip -1pt} \|}
\newcommand{\diam}{\mathrm{diam}}
\newcommand{\cM}{{\cal M}}
\newcommand{\E}{{\mathbb{E}}}
\newcommand{\R}{{\mathbf{R}}}
\newcommand{\be}{\begin{eqnarray}}
\newcommand{\ee}{\end{eqnarray}}
\newcommand{\beq}{\begin{equation}}
\newcommand{\eeq}{\end{equation}}
\newcommand{\ben}{\begin{eqnarray*}}
\newcommand{\een}{\end{eqnarray*}}
\newcommand{\nn}{\nonumber}
\begin{document}

\makeatletter
\renewcommand\theequation{\thesection.\arabic{equation}}
\@addtoreset{equation}{section}
\makeatother

\title{Stochastic Convergence of A Nonconforming Finite Element Method for the Thin Plate Spline Smoother for Observational Data}
\author{Zhiming Chen\thanks{LSEC, Institute of Computational Mathematics,
Academy of Mathematics and System Sciences and School of Mathematical Science, University of
Chinese Academy of Sciences, Chinese Academy of Sciences,
Beijing 100190, China. This author was supported in part by
the China NSF under the grant
113211061. ({\tt zmchen@lsec.cc.ac.cn}).}
\and Rui Tuo\thanks{Institute of Systems Science,
Academy of Mathematics and System Sciences, Chinese Academy of Sciences,
Beijing 100190, China. ({\tt tuorui@amss.ac.cn}).}
\and Wenlong Zhang \thanks{School of Mathematical Science, University of
Chinese Academy of Sciences, Chinese Academy of Sciences,
Beijing 100190, China. 
({zhangwl@lsec.cc.ac.cn}).}}

\date{}
\maketitle

\begin{abstract}
The thin plate spline smoother is a classical model for finding a smooth function from the knowledge of its observation at scattered locations which may have random noises. We consider a nonconforming Morley finite element method to approximate the model. We prove the stochastic convergence of the finite element method which characterizes the tail property of the probability distribution function of the finite element error. We also propose a self-consistent iterative algorithm to determine the smoothing parameter based on our theoretical analysis. Numerical examples are included to confirm the theoretical analysis and to show the competitive performance of the self-consistent algorithm for finding the smoothing parameter.
\end{abstract}

\noindent {\bf Key words.} Thin plate spline, Morley element, stochastic convergence, optimal parameter choice.

\section{Introduction}
The thin plate spline smoother is a classical mathematical model for finding a smooth function from the knowledge of its observation at scattered locations which may subject to random noises.  Let $\Omega$ be a bounded Lipschitz domain in
$\R^d$ ($d \leq 3$) and $u_0\in H^2(\Om)$ be the unknown smooth function. Let $\{x_i\}^n_{i=1}\subset\Omega$ be the
scattered locations in the domain where the observation is taken. We want to approximate $u_0$ from the noisy data $y_i=u_0(x_i)+e_i,\ 1\leq i \leq n$, where $\{e_i\}^n_{i=1}$ are independent and identically distributed random variables on some probability space ($\mathfrak{X},\mathcal{F},\mathbb{P})$ satisfying
$\mathbb{E}[e_i]=0$ and $\mathbb{E}[e^2_i]\leq \sigma^2$.  Here and in the following $\mathbb{E}[X]$ denotes the expectation of the random variable $X$. The thin plate spline smoother, i.e., $D^2$-spline smoother to approximate $u_0$, is defined to be the unique solution of the following variational problem
\beq\label{p1}
\mathop {\rm min}\limits_{u\in H^2(\Omega)}\frac{1}{n}\sum\limits_{i=1}^{n} {(u(x_i)-y_i)^2+\lambda_n |u|_{H^2(\Omega)}^2},
\eeq
where $\lambda_n>0$ is the smoothing parameter.

The spline model for scattered data has been extensively studied in the literature. For $\Om=\R^d$, \cite{Duchon} proved
that \eqref{p1} has a unique solution $u_n\in H^2(\R^d)$ when the set $\mathbb{T}=\{x_i:i=1,2,\cdots,n\}$ is not collinear (i.e.
the points in $\mathbb{T}$ are not on the same plane).  Explicit formula of the solution is constructed in \cite{Duchon} based on radial basis functions. \cite{Utreras} derived the convergence
rate for the expectation of the error $|u_n-u_0|_{H^j(\Om)}^2$, $j=0,1,2$. Under the assumption that $e_i$, $i=1,2,\cdots,n$, are
also sub-Gaussion random variables, \cite{Geer} proved the stochastic convergence of the error in terms of the empirical norm $\|u_n-u_0\|_n:=(n^{-1}\sum^n_{i=1}|u_n(x_i)-u_0(x_i)|^2)^{1/2}$ when $d=1$. The stochastic convergence which provides additional tail information of the probability distribution function for the random error is very desirable for the approximation of random variables. We refer to \cite{Wahba} for further information of the thin plate spline smoothers.

It is well-known that the numerical method based on radial basis functions to solve the thin plate spline smoother requires to solve
a symmetric indefinite dense linear system of equations of the size $O(n)$, which is challenging for applications with very large data sets \cite{Roberts}. Conforming finite element methods for the solution of thin plate model are studied in \cite{Arcang} and the references therein. In \cite{Roberts} a mixed finite element method for solving $\na u_n$ is proposed and the expectation of the finite element error is proved. The advantage of the mixed finite element method in \cite{Roberts} lies in that one can use
simple $H^1(\Om)$-conforming finite element spaces. The $H^1$ smoother in \cite{Roberts} that the mixed finite element method aims to approximate is not equivalent to the thin plate spline model \eqref{p1}.

In this paper we consider the nonconforming finite element approximation to the problem \eqref{p1}. We use the Morley element
\cite{Morley, Shi, wang} which is of particular interest for solving fourth order PDEs since it has the least number of degrees of freedom on each element. The difficulty of the finite element analysis for the thin plate smoother is the low stochastic regularity of the solution $u_n$. One can only prove the boundedness of $\E[|u_n|_{H^2(\Om)}^2]$ (see Theorem \ref{thm:2.1} below). This difficulty is overcome by a smoothing operator based on the $C^1$-element for
any Morley finite element functions. We also prove the probability distribution function of the empirical norm of the finite element error has an exponentially decaying tail. For that purpose we also prove the convergence of the error $\|u_n-u_0\|_n$ in terms of
the Orlicz $\psi_2$ norm (see Theorem \ref{thm:4.1} below) which improves the result in \cite{Geer}.

One of the central issues in the application of the thin plate model is the choice of the smoothing parameter $\lambda_n$. In the literature it is usually made by the method of cross validation \cite{Wahba}. The analysis in this paper suggests the optimal choice should be
\beq\label{k1}
\lam_n^{1/2+d/8}=O(\sigma n^{-1/2}(|u_0|_{H^2(\Om)}+\sigma n^{-1/2})^{-1}).
\eeq
Since one does not know $u_0$ and the upper bound of the variance $\sigma$ in practical applications, we propose a self-consistent algorithm to determine $\lam_n$ from the natural initial guess $\lam_n=n^{-\frac{4}{4+d}}$. Our numerical experiments show this self-consistent algorithm performs rather well.

The layout of the paper is as follows. In section 2 we recall some preliminary properties of the thin plate model. In section 3 we
introduce the nonconforming finite element method and show the convergence of the finite element solution in terms of the expectation of Sobolev norms. In section 4 we study the tail property of the probability distribution function for the finite element
error based on the theory of empirical process for sub-Gaussion noises. In section 5 we introduce our self-consistent algorithm
for finding the smooth parameter $\lam_n$ and show several numerical examples to support the analysis in this paper.

\section{The thin plate model}

In this section we collect some preliminary results about the thin plate smoother \eqref{p1}. In this paper, we will always assume that $\Omega$ is a bounded Lipschitz domain satisfying the uniform cone condition. We will also assume that $\mathbb{T}$ are uniformly distributed
in the sense that \cite{Utreras} there exists a constant $B>0$ such that $\frac{h_{\max}}{h_{\min}} \leq B$, where
\ben
h_{\max}=\mathop {\rm sup}\limits_{x\in \Omega} \mathop {\rm inf}\limits_{1 \leq i \leq n} |x-x_i| ,\ \ \ \
h_{\min}=\mathop {\rm inf}\limits_{1 \leq i \neq j \leq n} |x_i-x_j|.
\een
It is easy to see that there exist constants $B_1,B_2$ such that $B_1n^{-1/d}\le h_{\max}\le Bh_{\min}\le B_2n^{-1/d}$.

We write the empirical inner product between the data and any function $v\in C(\bar\Omega)$ as $(y,v)_n=\frac{1}{n}\sum^n_{i=1}y_iv(x_i)$. We also write $(u,v)_n=\frac{1}{n}\sum^n_{i=1}u(x_i)v(x_i)$ for any $u,v\in C(\bar\Omega)$ and the empirical norm $\|u\|_n=(\frac{1}{n}\sum_{i=1}^{n} u^2(x_i))^{1/2}$ for any $u\in C(\bar\Om)$. By \cite[Theorems 3.3-3.4]{Utreras}, there exists a constant $C>0$ depending only on $\Om,B$ such that for any $u\in H^2(\Om)$ and sufficiently small $h_{\max}$,
\beq\label{f2}
\|u\|_{L^2(\Om)}\le C(\|u\|_n+h_{\max}^2|u|_{H^2(\Om)}),\ \ \|u\|_n\le C(\|u\|_{L^2(\Om)}+h_{\max}^2|u|_{H^2(\Om)}).
\eeq
It follows from \eqref{f2} and Lax-Milgram lemma that the minimization problem \eqref{p1} has a unique solution $u_n\in H^2(\Om)$. The following convergence result is proved in \cite{Utreras}.

\begin{lemma}\label{lem:2.1}
Let $U_n\in H^2(\R^d)$ be the solution of following variational problem:
\beq\label{p2}
\mathop {\rm min}\limits_{u\in D^{-2}L^2(\R^d)}\|u-y\|^2_n+\lambda_n |u|_{H^2(\R^d)}^2,
\eeq
where $D^{-2}L^2(\R^d)=\{u | D^{\alpha}u \in L^2(\R^d),\ |\alpha|=2 \}$.
Then there exist constants $\lambda_0 > 0$ and $C>0$ such that for any $\lambda_n \leq \lambda_0$ and $n\lambda_n^{d/4}\ge 1$,
\be
& &\E\big[\|U_n-u_0\|^2_n\big] \leq C \lambda_n |u_0|^2_{H^2(\Omega)} + \frac{C\sigma^2}{n\lambda^{d/4}_n},\label{p3}\\
& &\label{p4}
\E \big[|U_n|^2_{H^2(\Om)}\big] \leq C |u_0|^2_{H^2(\Omega)} + \frac{C\sigma^2}{n\lambda^{1+d/4}_n}.
\ee
\end{lemma}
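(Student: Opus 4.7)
The plan is to exploit that \eqref{p2} is a quadratic minimization, so $U_n$ depends linearly on $(y_1,\ldots,y_n)$. Splitting $y_i=u_0(x_i)+e_i$ yields $U_n=V_n+W_n$, where $V_n$ solves \eqref{p2} with noise-free data $u_0(x_i)$ and $W_n$ solves \eqref{p2} with pure-noise data $e_i$. Since $W_n$ is linear in the centered noise, $\E[W_n(x)]=0$ pointwise, so the cross terms vanish in expectation and
\beq
\E\big[\|U_n-u_0\|_n^2\big]=\|V_n-u_0\|_n^2+\E\big[\|W_n\|_n^2\big],\ \ \E\big[|U_n|_{H^2}^2\big]=|V_n|_{H^2}^2+\E\big[|W_n|_{H^2}^2\big].
\eeq
This reduces \eqref{p3}--\eqref{p4} to a deterministic estimate for the bias $V_n$ and an expectation estimate for the variance $W_n$.

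For the bias, first extend $u_0\in H^2(\Om)$ to $\tilde u_0\in H^2(\R^d)\subset D^{-2}L^2(\R^d)$ by a Stein extension, so that $|\tilde u_0|_{H^2(\R^d)}\lc|u_0|_{H^2(\Om)}$. Testing minimality of $V_n$ against $\tilde u_0$ directly gives
\beq
\|V_n-u_0\|_n^2+\lam_n|V_n|_{H^2(\R^d)}^2\le\lam_n|\tilde u_0|_{H^2(\R^d)}^2\lc\lam_n|u_0|_{H^2(\Om)}^2,
\eeq
which accounts for both the $\lam_n|u_0|_{H^2}^2$ contribution in \eqref{p3} and the $|u_0|_{H^2}^2$ term in \eqref{p4}. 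For the variance, the explicit radial-basis-function representation of $W_n$ on $\R^d$ realises the map $e\mapsto(W_n(x_1),\ldots,W_n(x_n))^\top$ as multiplication by a symmetric influence matrix $A_n$ with spectrum in $[0,1]$. The Euler identity $\|W_n\|_n^2+\lam_n|W_n|_{H^2}^2=(e,W_n)_n$ in matrix form, combined with $\E[e_ie_j]\le\si^2\de_{ij}$, yields
\beq
\E\big[\|W_n\|_n^2\big]\le\frac{\si^2}{n}\mathrm{tr}(A_n^2),\ \ \E\big[\lam_n|W_n|_{H^2}^2\big]\le\frac{\si^2}{n}\mathrm{tr}(A_n-A_n^2),
\eeq
both of which are dominated by $(\si^2/n)\mathrm{tr}(A_n)$ since the spectrum of $A_n$ lies in $[0,1]$.

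The main obstacle is then the trace estimate $\mathrm{tr}(A_n)\lc\lam_n^{-d/4}$ on the quasi-uniform scattered set $\{x_i\}$. Using the Fourier symbol $|\xi|^4$ for $|\cdot|_{H^2(\R^d)}^2$ together with the spacing $h_{\min}\sim h_{\max}\sim n^{-1/d}$, the eigenvalues of $A_n$ are comparable to $(1+\lam_n\mu_k)^{-1}$ for a discrete family $\{\mu_k\}$ mimicking $|\xi_k|^4$ on a lattice of spacing $\sim n^{1/d}$; counting indices with $\lam_n\mu_k\lc 1$ produces $\lam_n^{-d/4}$ such terms, while the hypothesis $n\lam_n^{d/4}\ge 1$ is exactly what ensures this continuum count dominates the aliasing remainder. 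Inserting this bound into the previous display supplies the variance contributions $\si^2/(n\lam_n^{d/4})$ in \eqref{p3} and $\si^2/(n\lam_n^{1+d/4})$ in \eqref{p4}. Sobolev extension, the convex analysis, and the radial-basis-function formula are classical; the quasi-uniform-to-continuum spectral transfer is the only genuinely delicate ingredient.
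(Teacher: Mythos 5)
The paper does not prove Lemma \ref{lem:2.1} at all: it is quoted verbatim from \cite{Utreras}, and the paper's own work (Theorem \ref{thm:2.1}) only transfers it from $\R^d$ to $\Om$. Your outline is therefore measured against that classical argument, and its skeleton --- linearity of $U_n$ in the data, the bias--variance split $U_n=V_n+W_n$ with vanishing cross terms, the deterministic bias bound by testing minimality against an extension of $u_0$, and the variance bound $\E[\|W_n\|_n^2]\le\frac{\sigma^2}{n}\mathrm{tr}(A_n^2)$, $\lambda_n\E[|W_n|^2_{H^2}]\le\frac{\sigma^2}{n}\mathrm{tr}(A_n-A_n^2)$ via the symmetric influence matrix --- is exactly the route of \cite{Utreras} and is sound. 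One slip in the bias step: a Stein extension controls $|\tilde u_0|_{H^2(\R^d)}$ by the full norm $\|u_0\|_{H^2(\Om)}$, not by the seminorm $|u_0|_{H^2(\Om)}$ that appears in \eqref{p3}--\eqref{p4}; you need the minimal-seminorm extension $F$ with $|Fu|_{H^2(\R^d)}\le C|u|_{H^2(\Om)}$ of \cite{Duchon, Utreras}, the same operator the paper itself invokes in the proof of Theorem \ref{thm:2.1}. You should also account for the null space of the penalty (polynomials of degree $\le 1$), on which $A_n$ has eigenvalue $1$; it only adds $d+1\le C\lambda_n^{-d/4}$ to the trace, but it is why $A_n=(I+n\lambda_n\Sigma)^{-1}$ with $\Sigma$ merely positive semidefinite.

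The genuine gap is the step you yourself flag as the "only delicate ingredient": $\mathrm{tr}(A_n)\lesssim\lambda_n^{-d/4}$. Your justification --- eigenvalues of $A_n$ "comparable to $(1+\lambda_n\mu_k)^{-1}$ for a discrete family $\{\mu_k\}$ mimicking $|\xi_k|^4$ on a lattice of spacing $\sim n^{1/d}$" --- is a heuristic, not a proof. The points $x_i$ are scattered, not a lattice, so there is no Fourier diagonalization; the relevant spectrum is that of the Demmler--Reinsch eigenproblem for the empirical quadratic form on the bounded domain $\Om$, and what is actually needed is a lower bound of the form $\mu_k\gtrsim k^{4/d}$ (beyond the polynomial null space, uniformly in $n$ for quasi-uniform designs and $h_{\max}$ small). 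Establishing that bound is precisely the technical core of \cite{Utreras}: it uses the sampling inequalities \eqref{f2} to pass between the empirical norm and $L^2(\Om)$, plus eigenvalue/entropy asymptotics for the biharmonic-type Rayleigh quotient on a bounded domain (in the spirit of \cite{Birman}), and this is where the threshold $\lambda_0$ and the condition $n\lambda_n^{d/4}\ge 1$ actually enter. Saying that this hypothesis "is exactly what ensures the continuum count dominates the aliasing remainder" names the difficulty without resolving it. So, as written, your argument correctly reduces the lemma to this spectral estimate but does not prove it; everything else (the Euler identity in matrix form, $\E[e^\top Me]\le\sigma^2\mathrm{tr}(M)$ for matrices with nonnegative diagonal, the domination by $\mathrm{tr}(A_n)$) is routine and correct.
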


Define the bilinear form $a:H^2(\Om)\times H^2(\Om)\to\R$ as
\beq\label{a2}
a_\Om(u,v) = {\sum\limits_{1\leq i,j\leq d}\int_{\Omega}  {\frac{\partial^2 u}{\partial x_i \partial x_j}\frac{\partial^2 v}{\partial x_i \partial x_j}}}dx,\ \ \ \ \forall u,v\in H^2(\Om).
\eeq
It is obvious that $|u|_{H^2(\Om)}^2=a(u,u)$ for any $u\in H^2(\Om)$. 

\begin{theorem}\label{thm:2.1}
Let $u_n\in H^2(\Om)$ be the unique solution of (\ref{p1}).
Then there exist constants $\lambda_0 > 0$ and $C>0$ such that for any $\lambda_n \leq \lambda_0$ and $n\lambda_n^{d/4}\ge 1$,
\be
& &\label{p5}
\mathbb{E} \big[\|u_n-u_0\|^2_n\big] \leq C \lambda_n |u_0|^2_{H^2(\Omega)} + \frac{C\sigma^2}{n\lambda^{d/4}_n},\\
& &\label{p6}
\mathbb{E} \big[|u_n|^2_{H^2(\Omega)}\big] \leq C |u_0|^2_{H^2(\Omega)} + \frac{C\sigma^2}{n\lambda^{1+d/4}_n}.
\ee
\end{theorem}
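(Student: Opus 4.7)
My approach is to transfer the bounds of Lemma~\ref{lem:2.1} for the $\R^d$ smoother $U_n$ directly to $u_n$ by a comparison argument. Since $U_n|_\Om\in H^2(\Om)$ is a legitimate competitor in \eqref{p1} and $|U_n|_{H^2(\Om)}\le |U_n|_{H^2(\R^d)}$, the optimality of $u_n$ yields
\[
\|u_n-y\|_n^2+\lam_n|u_n|_{H^2(\Om)}^2\le \|U_n-y\|_n^2+\lam_n|U_n|_{H^2(\R^d)}^2.
\]
Expanding $\|u_n-y\|_n^2=\|u_n-U_n\|_n^2+2(u_n-U_n,U_n-y)_n+\|U_n-y\|_n^2$ and cancelling the common term gives
\[
\|u_n-U_n\|_n^2+\lam_n|u_n|_{H^2(\Om)}^2\le \lam_n|U_n|_{H^2(\R^d)}^2-2(u_n-U_n,U_n-y)_n,
\]
so the whole proof reduces to controlling the cross term on the right.

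To do so, I would use the Euler--Lagrange equation for $U_n$, namely $(U_n-y,v)_n+\lam_n a_{\R^d}(U_n,v)=0$ for every $v\in D^{-2}L^2(\R^d)$, tested with $v=E(u_n-U_n)$. Here $E:H^2(\Om)\to D^{-2}L^2(\R^d)$ is an extension operator satisfying $(E\varphi)|_\Om=\varphi$ and the \emph{seminorm} bound $|E\varphi|_{H^2(\R^d)}^2\le C|\varphi|_{H^2(\Om)}^2$. I would construct $E$ in a standard way: first subtract the Bramble--Hilbert projection $\pi\varphi$ onto affine polynomials so that $\varphi-\pi\varphi$ enjoys the full-norm estimate $\|\varphi-\pi\varphi\|_{H^2(\Om)}\le C|\varphi|_{H^2(\Om)}$; next Stein-extend $\varphi-\pi\varphi$ to $\R^d$ (the uniform cone condition justifies this); and finally add $\pi\varphi$ back, which lies in $D^{-2}L^2(\R^d)$ with vanishing $H^2$-seminorm. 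Because all sample points $x_i$ lie in $\Om$, we have $(U_n-y,E(u_n-U_n))_n=(U_n-y,u_n-U_n)_n$, so the E--L equation becomes
\[
(u_n-U_n,U_n-y)_n=-\lam_n a_{\R^d}(U_n,E(u_n-U_n)).
\]

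Substituting back, Cauchy--Schwarz and the extension estimate give $2\lam_n|a_{\R^d}(U_n,E(u_n-U_n))|\le 2C\lam_n|U_n|_{H^2(\R^d)}|u_n-U_n|_{H^2(\Om)}$; bounding $|u_n-U_n|_{H^2(\Om)}\le |u_n|_{H^2(\Om)}+|U_n|_{H^2(\R^d)}$ and applying Young's inequality to absorb $\tfrac12\lam_n|u_n|_{H^2(\Om)}^2$ into the left-hand side leaves
\[
\|u_n-U_n\|_n^2+\tfrac12\lam_n|u_n|_{H^2(\Om)}^2\le C\lam_n|U_n|_{H^2(\R^d)}^2.
\]
Taking expectation and applying Lemma~\ref{lem:2.1} (whose proof in fact produces the $\R^d$-seminorm estimate $\E[|U_n|_{H^2(\R^d)}^2]\le C|u_0|_{H^2(\Om)}^2+C\si^2/(n\lam_n^{1+d/4})$, the stated $\Om$-version being an immediate corollary) delivers \eqref{p6} at once. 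The estimate \eqref{p5} then follows from the triangle inequality $\|u_n-u_0\|_n\le \|u_n-U_n\|_n+\|U_n-u_0\|_n$ combined with the control on $\E[\|u_n-U_n\|_n^2]$ just obtained and the bound \eqref{p3}.

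The main obstacle is the construction of the \emph{seminorm}-controlled extension $E$: a generic Sobolev extension would produce only the full-norm bound $\|E\varphi\|_{H^2(\R^d)}\le C\|\varphi\|_{H^2(\Om)}$, which would introduce lower-order terms such as $\lam_n\|u_n\|_{H^1(\Om)}^2$ on the right-hand side that cannot be absorbed by $\lam_n|u_n|_{H^2(\Om)}^2$. Quotienting by affine polynomials via Bramble--Hilbert is precisely what makes the extension constant multiply only the $H^2$-seminorm, after which Young's inequality closes the argument.
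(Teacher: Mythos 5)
Your proposal is correct and follows essentially the same route as the paper: compare $u_n$ with $U_n$ through an extension to $\R^d$ bounded by the $H^2$-seminorm (the paper simply cites the minimal-seminorm extension of Duchon/Utreras where you build one via Bramble--Hilbert plus Stein), use the variational characterizations to control the coupling term, absorb with Young's inequality, and conclude with Lemma \ref{lem:2.1}. The only loose end is your parenthetical appeal to the \emph{proof} of Lemma \ref{lem:2.1} for $\E[|U_n|^2_{H^2(\R^d)}]$ — the lemma as stated bounds only the $\Om$-seminorm — but this closes in one line exactly as in the paper: $\tilde U_n:=F(U_n|_\Om)$ agrees with $U_n$ at all data points, so optimality of $U_n$ gives $|U_n|_{H^2(\R^d)}\le|\tilde U_n|_{H^2(\R^d)}\le C|U_n|_{H^2(\Om)}$, after which the stated bound \eqref{p4} suffices.
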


\begin{proof} It is clear that $u_n\in H^2(\Om)$ and $U_n\in H^2(\R^d)$ satisfy the following variational forms:
\be
& &\label{p7}
\lambda_n a_\Om(u_n,v)+(u_n,v)_n =(y,v)_n, \ \forall v\in H^2(\Omega),\\
& &\label{p8}
\lambda_n a_{\R^d}(U_n,w)+(U_n,w)_n=(y,w)_n, \ \forall w\in H^2(\R^d).
\ee
Let $F:H^2(\Om)\to D^{-2}L^2(\R^d)$ be the extension operator defined by
\ben
Fu=\mathop{\rm argmin}\limits_{v\in D^{-2}L^2(\R^d),v|_\Om=u}\ |v|_{H^2(\Om)}.
\een
It is known \cite{Duchon, Utreras} that $Fu=u$ in $\Om$ and $|Fu|_{H^2(\R^d)}\le C|u|_{H^2(\Om)}$ for some constant $C>0$. We write $\tilde u=Fu$ in $\R^d$ in the following.
Thus, it follows from \eqref{p7}-\eqref{p8} that
\ben
\lambda_na_{\Om}(u_n-U_n,v)+(u_n-U_n,v)_n=\lambda_n a_{\R^d\backslash\bar\Om}(U_n,\tilde v),\ \ \ \ \forall v\in H^2(\Om),
\een
which implies by taking $v=u_n-U_n|_\Om\in H^2(\Om)$ that
\ben
\lambda_n |u_n-U_n|_{H^2(\Om)}^2+\|u_n-U_n\|_n^2&\le&\lam_n|U_n|_{H^2(\R^d)}|\tilde u_n-\tilde U_n|_{H^2(\R^d)}\\
&\le&C\lam_n|U_n|_{H^2(\R^d)}|u_n-U_n|_{H^2(\Om)},
\een
where $\tilde U_n=F(U_n|_\Om)$.
Therefore
\beq\label{a1}
|u_n-U_n|^2_{H^2(\Om)}\le C|U_n|_{H^2(\R^d)}^2,\ \ \ \ \|u_n-U_n\|_n^2\le\lam_n|U_n|^2_{H^2(\R^d)}.
\eeq
Since $U_n$ is the solution of \eqref{p2} and $\tilde U_n=U_n$ in $\Om$, we have $|U_n|_{H^2(\R^d)}\le |\tilde U_n|_{H^2(\R^d)}\le C|U_n|_{H^2(\Om)}$. Therefore, $\mathbb{E}[|u_n|_{H^2(\Om)}^2]\le C\mathbb{E}[|U_n|_{H^2(\Om)}^2]$, which
implies \eqref{p6} by using \eqref{p4}. Similarly one obtains \eqref{p5} from the second estimate in \eqref{a1} and \eqref{p3}-\eqref{p4}. This completes the proof.
\end{proof}

Theorems \ref{lem:2.1} and \ref{thm:2.1} suggest that an optimal choice of the parameter $\lam_n$ is such that $\lam_n^{1+d/4}=O((\sigma^2n^{-1})|u_0|_{H^2(\Om)}^{-2})$.

\section{Nonconforming finite element method}

In this section we consider the nonconforming finite element approximation to the thin plate model \eqref{p1} whose solution $u_n\in H^2(\Om)$ satisfies the following weak formulation
\beq\label{b1}
\lambda_n a_\Om(u_n,v)+(u_n,v)_n =(y,v)_n, \ \forall v\in H^2(\Omega).
\eeq
We assume $\Om$ is a polygonal or polyhedral domain in $\R^d$ $(d=2,3)$ in the reminder of this paper. Let $\cM_h$ be a family of shape regular and quasi-uniform finite element meshes over the domain $\Om$. We will use the Morley element \cite{Morley} for 2D, \cite{wang} for 3D to
define our nonconforming finite element method. The Morley element is a triple $(K,P_K,\Sigma_K)$, where $K\in\cM_h$ is a simplex in $\R^d$, $P_K=P_2(K)$ is the set of second order polynomials in $K$, and $\Sigma_K$ is the set of the degrees of freedom. In 2D, for the element $K$ with vertices $a_i, 1\le i\le 3$, and mid-points $b_i$ of the edge opposite to the vertex $a_i$, $1\le i\le 3$, $\Sigma_K=\{p(a_i), \pa_\nu p(b_i), 1\le i\le 3,\forall p\in C^1(K)\}$. In 3D, for the element $K$ with edges $S_{ij}$ which connects the vertices $a_i,a_j$, $1\le i<j\le 4$,
and faces $F_j$ opposite to $a_j$, $1\le j\le 4$, $\Sigma_K=\{\frac 1{|S_{ij}|}\int_{S_{ij}}p, 1\le i<j\le 4,
\frac 1{|F_j|}\int_{F_j}\pa_\nu p,1\le j\le 4,\forall p\in C^1(K)\}$. Here $\pa_\nu p$ is the normal derivative of $p$ of the edges (2D)
or faces (3D) of the element. We refer to Figure \ref{c1} for the illustration of the degrees of freedom of the Morley element.

\begin{figure}[t]
\begin{center}
\includegraphics[width=10cm]{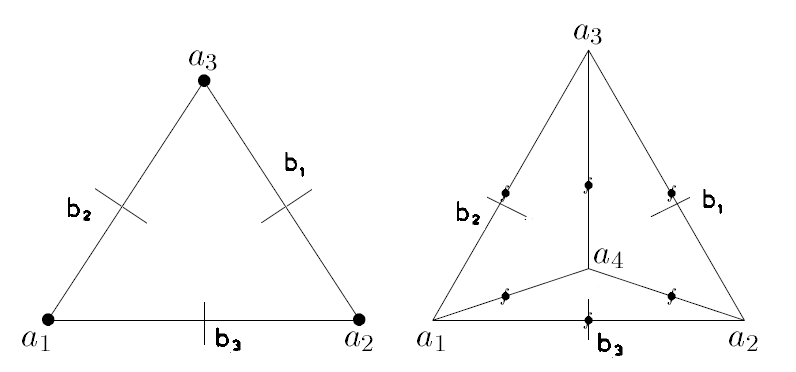}
\end{center}
\caption{The degrees of freedom of 2D Morley (left) and 3D Morley (right) element.}\label{c1}
\end{figure}

Let $V_h$ be the Morley finite element space 
\ben
V_h=\{v_h: v_h|_K\in P_2(K),\forall K\in\cM_h,f(v_h|_{K_1})=f(v_h|_{K_2}),\forall f\in\Sigma_{K_1}\cap\Sigma_{K_2}\}.
\een
The functions in $V_h$ may not be continuous in $\Om$. Given a set $G\subset\R^2$, let $\cM_h(G)=\{K\in\cM_h: G\cap K\not=\emptyset\}$ and $N(G)$ the number of elements in $\cM_h(G)$. For any $v_h\in V_h$, we define
\beq\label{h1-1}
\hat v_h(x_i)=\frac 1{N(x_i)}\sum_{K'\in\cM_h(x_i)} (v_h|_{K'})(x_i),\ \ i=1,2,\cdots,n.
\eeq
Notice that if $x_i$ is located inside some element $K$, then $\cM_h(x_i)=\{K\}$ and $\hat v_h(x_i)=v_h(x_i)$, $i=1,2,\cdots,n$.
With this definition we know that $(\hat v_h,\hat w_h)_n$ and $(e,\hat w_h)_n$ are well-defined for any $v_h,w_h\in V_h$.

Let
\ben
a_h(u_h,v_h)=\sum_{K\in\cM_h}\sum_{1\le i,j\le d}\int_K\frac{\pa^2 u_h}{\pa x_i\pa x_j}\frac{\pa^2 v_h}{\pa x_i\pa x_j}dx,\ \ \ \ \forall u_h,v_h\in V_h.
\een
The finite element approximation of the problem \eqref{b1} is to find $u_h\in V_h$ such
that
\beq\label{d1}
\lambda_n a_h(u_h,v_h)+(\hat u_h,\hat v_h)_n=(y,\hat v_h)_n,\ \ \ \ \forall v_h\in V_h.
\eeq
Since the sampling point set $\mathbb{T}$ is not collinear, by Lax-Milgram lemma, the problem \eqref{d1} has a unique solution.

Let $I_K: H^2(K)\to P_2(K)$ be the canonical local nodal value interpolant of Morley element \cite{Shi, wang} and $I_h: L^2(\Om)\to V_h$ be the global nodal value interpolant such that $(I_h u)|_K=I_K u$ for any $K\in\cM_h$ and piecewise $H^2(K)$ functions $u\in L^2(\Om)$. We introduce the mesh dependent semi-norm $|\cdot|_{m,h}$, $m\ge 0$,
\ben
|v|_{m,h}=\left(\sum_{K\in\cM_h}|v|_{H^m(K)}^2\right)^{1/2},
\een
for any $v\in L^2(\Om)$ such that $v|_K\in H^m(K),\forall K\in\cM_h$.
\begin{lemma}\label{lem:3.1}
We have
\be
& &|u-I_K u|_{H^m(K)}\le Ch_K^{2-m}|u|_{H^2(K)},\ \ \forall u\in H^m(K), 0\le m\le 2, \label{b3}\\
& &\|u-\widehat{I_h u}\|_n\le Ch^2|u|_{H^2(\Om)},\ \ \forall u\in H^2(\Om),\label{b4}
\ee
where $h_K$ is the diameter of the element $K$ and $h=\max_{K\in\cM_h} h_K$.
\end{lemma}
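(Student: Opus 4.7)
The plan is to establish \eqref{b3} via the standard Bramble--Hilbert plus scaling argument for the Morley interpolant, and then to deduce \eqref{b4} by combining \eqref{b3} with the Sobolev embedding $H^2 \hookrightarrow C^0$ (valid since $d\le 3$) and a counting argument for the number of sample points per element enabled by the quasi-uniform distribution assumption on $\mathbb{T}$.

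For \eqref{b3}, I would first check that $I_K$ reproduces $P_2(K)$, which is immediate from the unisolvence of the Morley degrees of freedom on $P_2(K)$. The Morley dof (vertex values and edge mid-point normal derivatives in 2D, edge averages and face normal-derivative averages in 3D) are bounded linear functionals on $H^2(K)$ by trace theory and $H^2\hookrightarrow C^0$, so on a reference simplex $\hat K$ the interpolant $\hat I_{\hat K}:H^2(\hat K)\to P_2(\hat K)$ is continuous. The Bramble--Hilbert lemma then yields $|\hat u-\hat I_{\hat K}\hat u|_{H^m(\hat K)}\le C|\hat u|_{H^2(\hat K)}$ for $0\le m\le 2$, and an affine change of variables together with the shape regularity of $\cM_h$ converts this to \eqref{b3}.

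For \eqref{b4} the key pointwise estimate is
\ben
\|u-I_K u\|_{L^\infty(K)}\le Ch_K^{2-d/2}|u|_{H^2(K)},
\een
obtained by combining $H^2(\hat K)\hookrightarrow C^0(\hat K)$ with scaling and \eqref{b3}. From the definition \eqref{h1-1} and the Cauchy--Schwarz inequality,
\ben
|u(x_i)-\widehat{I_h u}(x_i)|^2\le \max_{K'\in\cM_h(x_i)}\|u-I_{K'}u\|_{L^\infty(K')}^2,
\een
so that
\ben
\|u-\widehat{I_h u}\|_n^2\le \frac{C}{n}\sum_{K\in\cM_h}\#\{i: x_i\in \bar K\}\cdot h_K^{4-d}|u|_{H^2(K)}^2.
\een
To count sample points per element, note that the balls $\{B(x_i,h_{\min}/2)\}$ are pairwise disjoint and every such ball meeting $\bar K$ lies in a $Ch_K$-neighbourhood of $K$, giving $\#\{i:x_i\in\bar K\}\le C(h_K/h_{\min})^d\le Cnh_K^d$ since $h_{\min}\ge B_1 n^{-1/d}$. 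Substituting and using $h_K\le h$,
\ben
\|u-\widehat{I_h u}\|_n^2\le C\sum_{K\in\cM_h} h_K^4|u|_{H^2(K)}^2\le Ch^4|u|_{H^2(\Om)}^2,
\een
which is \eqref{b4}.

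The main obstacle is the bookkeeping at the discontinuities of $I_h u$: sample points may lie on element interfaces, where $\widehat{I_h u}$ is defined only as an average over several neighbouring element restrictions. Replacing the average by the $L^\infty$ bound on each neighbouring element handles this uniformly. The only other point to watch is that the Sobolev embedding $H^2\hookrightarrow C^0$ is used with the correct scaling exponent $2-d/2$ in both $d=2$ and $d=3$; the exponent $4-d$ then combines cleanly with the $h_K^d$ from the point count to give $h_K^4$ per element.
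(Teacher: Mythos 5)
Your argument follows essentially the same route as the paper: for \eqref{b3} the paper simply invokes the $P_2$-reproduction of $I_K$ together with standard interpolation theory (the Bramble--Hilbert/scaling argument you spell out), and for \eqref{b4} it uses exactly your two ingredients, namely the local bound $\|u-I_Ku\|_{L^\infty(K)}\le Ch_K^{2-d/2}|u|_{H^2(K)}$ (obtained in the paper via best $L^\infty$ approximation plus an inverse estimate and $L^2$-stability of $I_K$, in your case via the scaled Sobolev embedding --- both standard and interchangeable) and the count $\#\mathbb{T}_K\le Cnh^d$ coming from the uniform distribution of $\mathbb{T}$ and quasi-uniformity of the mesh. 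Your treatment of sample points lying on interfaces, replacing the average in \eqref{h1-1} by the $L^\infty$ bound on each neighbouring element, is also what the paper's estimate implicitly does.

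The one claim you should repair is the assertion that the 2D Morley degrees of freedom, in particular the pointwise normal derivative $\pa_\nu p(b_i)$ at the edge midpoint, are bounded linear functionals on $H^2(K)$ ``by trace theory and $H^2\hookrightarrow C^0$''. For $u\in H^2(K)$ with $d=2$ the trace of $\pa_\nu u$ on an edge lies only in $H^{1/2}$ of that edge, which does not embed into $C^0$, so point evaluation of the normal derivative is not bounded on $H^2(K)$, and the reference-element interpolant as you describe it is not even well defined on all of $H^2(\hat K)$ --- and its boundedness is exactly what your Bramble--Hilbert step requires. The standard fix, which is built into the canonical Morley interpolant of the references the paper cites (Shi, Wang--Xu), is to define $I_K$ through the mean value of $\pa_\nu u$ over each edge; this coincides with the midpoint value for $p\in P_2(K)$, so $P_2$ is still reproduced, and the edge-mean functionals are bounded on $H^2(K)$ by the trace theorem. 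With this definition your scaling argument, and hence the rest of your proof of \eqref{b3} and \eqref{b4}, goes through unchanged; the 3D degrees of freedom (edge averages and face averages of $\pa_\nu u$) are bounded on $H^2(K)$ as you state.
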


\begin{proof} Since $I_K p=p$ for any $p\in P_2(K)$ \cite{wang}, the estimate \eqref{b3} follows from the standard interpolation theory for finite element method \cite{Ciarlet}. Moreover, we have, by local inverse estimates and the standard interpolation estimates
\ben
\|u-I_K u\|_{L^\infty(K)}&\le&\inf_{p\in P_2(K)}\left[\|u-p\|_{L^\infty(K)}+|K|^{-1/2}\|I_K(u-p)\|_{L^2(K)}\right]\\
&\le& Ch_K^{2-d/2}|u|_{H^2(K)}.
\een
Let $\mathbb{T}_K=\{x_i\in\mathbb{T}: x_i\in K, 1\le i\le n\}$. By the assumption $\mathbb{T}$ is uniformly distributed and the mesh is quasi-uniform, we know that the cardinal $\#\mathbb{T}_K\le Cnh^d$. Thus
\ben
\|u-\widehat{I_h u}\|_n^2\le\frac 1n\sum_{K\in\cM_h}\#\mathbb{T}_K\|u-I_K u\|_{L^\infty(K)}^2\le Ch^4|u|_{H^2(\Om)}^2.
\een
This proves \eqref{b4}.
\end{proof}

The following property of Morley element will be used below.
\begin{lemma}\label{lem:new}
Let $K,K'\in\cM_h$ and $F=K\cap K'$.  There exists a constant C independent of $h$ such that for any $v_h\in V_h$, $|\alpha|\le 2 $,
\ben
\|\pa^{\alpha}(v_h|_K-v_h|_{K'})\|_{L^{\infty}(F)}\leq Ch^{2-|\alpha|-d/2}(|v_h|_{H^2(K)}+|v_h|_{H^2(K')}).
\een
\end{lemma}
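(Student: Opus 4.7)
The approach is a scaling argument combined with a finite-dimensional norm equivalence on a reference configuration. First, I would fix a point $x_0\in F$ and introduce rescaled variables $\tilde x = (x-x_0)/h$, which maps the patch $K\cup K'$ onto a patch $\tilde K\cup\tilde K'$ of diameter $O(1)$ sharing the face $\tilde F$; shape regularity and quasi-uniformity of $\cM_h$ ensure that the rescaled pair $(\tilde K,\tilde K')$ varies over a compact family of admissible reference configurations. Set $\tilde p(\tilde x) = (v_h|_K)(x_0+h\tilde x)$ and $\tilde p'(\tilde x) = (v_h|_{K'})(x_0+h\tilde x)$; both lie in $P_2$, and since the rescaling is isotropic the Morley compatibility conditions satisfied by $v_h$ on $F$ (nodal equality at vertices and equality of $\pa_\nu$ at the edge midpoint in 2D; edge-average and face-average-of-$\pa_\nu$ equalities in 3D) translate verbatim into the same conditions for $\tilde q := \tilde p-\tilde p'$ on $\tilde F$.

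On the reference configuration the task becomes $\|\pa^{\tilde\alpha}\tilde q\|_{L^\infty(\tilde F)} \le C(|\tilde p|_{H^2(\tilde K)}+|\tilde p'|_{H^2(\tilde K')})$, which I would establish as a norm equivalence on the finite-dimensional space $W$ of Morley-compatible pairs $(\tilde p,\tilde p')\in P_2(\tilde K)\times P_2(\tilde K')$. Let $N=\{(\ell,\ell):\ell\in P_1(\R^d)\}\subset W$ be the subspace of common linear modifications; both sides of the inequality are seminorms on $W$ that annihilate $N$, hence descend to the finite-dimensional quotient $W/N$. The estimate then follows from norm equivalence on $W/N$, provided one checks that the kernel of $(\tilde p,\tilde p')\mapsto |\tilde p|_{H^2(\tilde K)}+|\tilde p'|_{H^2(\tilde K')}$ on $W$ is exactly $N$. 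If both $H^2$ seminorms vanish then $\tilde p$ and $\tilde p'$ are linear, so $\tilde q$ is linear; in 2D the two vertex conditions force $\tilde q$ to vanish on the line of $\tilde F$, and the midpoint $\pa_\nu$ condition then kills the remaining normal component, giving $\tilde q\equiv 0$. In 3D the three edge-average conditions are, for a linear function, equivalent to vanishing of $\tilde q$ at the three edge midpoints of $\tilde F$ (since $\tilde q|_S$ is linear so $\int_S\tilde q=|S|\tilde q(m_S)$), hence $\tilde q$ vanishes on the plane of $\tilde F$, and the face-average condition $\int_{\tilde F}\pa_\nu\tilde q=0$ then kills the remaining constant.

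Undoing the scaling finishes the proof: the chain rule gives $\|\pa^{\tilde\alpha}\tilde q\|_{L^\infty(\tilde F)} = h^{|\alpha|}\|\pa^\alpha(v_h|_K-v_h|_{K'})\|_{L^\infty(F)}$ and $|\tilde p|_{H^2(\tilde K)} = h^{2-d/2}|v_h|_{H^2(K)}$, so the reference estimate reproduces exactly the factor $h^{2-|\alpha|-d/2}$ in the claim. A compactness argument across the family of admissible reference pairs (compact by shape regularity) yields a single constant $C$ independent of $h$ and of the particular interface. The principal obstacle is the kernel step, which relies on the specific combination of vertex, edge, and face-moment degrees of freedom of the Morley element and must be verified separately in 2D and 3D; once that is in place the scaling and compactness steps are routine.
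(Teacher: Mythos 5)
Your argument is sound, but it takes a different route from the paper. The paper disposes of the lemma in two lines: it quotes the known jump estimate $\|v_h|_K-v_h|_{K'}\|_{L^2(F)}\le Ch^{3/2}(|v_h|_{H^2(K)}+|v_h|_{H^2(K')})$ from \cite[Lemma 5]{wang} and then applies inverse estimates on the face, first $L^2(F)\to L^\infty(F)$ (cost $h^{-(d-1)/2}$) and then $h^{-|\alpha|}$ for the derivatives, which gives exactly $h^{2-|\alpha|-d/2}$. You instead rebuild the estimate from scratch: an isotropic rescaling of the two-element patch, a finite-dimensional norm equivalence on the space $W$ of Morley-compatible pairs modulo the common linears $N$, a kernel verification using the interface degrees of freedom (vertex values plus midpoint normal derivative in 2D; edge averages plus face-averaged normal derivative in 3D), and a compactness argument over shape-regular reference configurations; your scaling bookkeeping ($h^{|\alpha|}$ on the left, $h^{2-d/2}$ on the right) and the kernel computations in both dimensions are correct. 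What each buys: the paper's proof is shorter and leans on the literature (the Wang--Xu lemma itself encapsulates the interface-compatibility argument you carry out by hand), while yours is self-contained, treats $d=2,3$ uniformly, and makes explicit exactly which Morley degrees of freedom control the jump. The one step you should not wave off entirely is the uniformity of the constant: since the constraint space $W$ itself varies with the reference configuration, the clean way to get a single $C$ is a contradiction/compactness argument (normalized sequences of configurations and pairs, passing to an admissible limit where the kernel property still holds), with shape regularity and quasi-uniformity guaranteeing that limits remain nondegenerate; as stated this is standard but is genuinely where the work sits in your approach.
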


\begin{proof}
By \cite[Lemma 5]{wang} we know that  
\ben
\|v_h|_K-v_h|_{K'}\|_{L^2(F)}\leq Ch^{3/2} (|v_h|_{H^2(K)}+|v_h|_{H^2(K')}).
\een
By using the inverse estimate we then obtain
\ben
\|\pa^{\alpha}(v_h|_K-v_h|_{K'})\|_{L^{\infty}(F)} & \leq &  C h^{-|\alpha|}\|v_h|_K-v_h|_{K'}\|_{L^{\infty}(F)} \\
& \leq & Ch^{-|\alpha|-(d-1)/2}\|v_h|_K-v_h|_{K'}\|_{L^2(F)}\\
&\leq & Ch^{2-|\alpha|-d/2}(|v_h|_{H^2(K)}+|v_h|_{H^2(K')}).
\een
This proves the lemma.
\end{proof}

\begin{lemma}\label{lem:3.2} There exists a linear operator $\Pi_h:V_h\to H^2(\Om)$ such that for any $v_h\in V_h$,
\be
& &|v_h-\Pi_hv_h|_{m,h}\le Ch^{2-m}|v_h|_{2,h},\ \ m=0,1,2,\label{b7}\\
& &\|\hat v_h-\Pi_h v_h\|_n\le Ch^2|v_h|_{2,h},\label{b5}
\ee
where the constant $C$ is independent of $h$.
\end{lemma}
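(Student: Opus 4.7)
The plan is to build $\Pi_h$ via a $C^1$-conforming macro-element space $W_h \subset H^2(\Omega)$, for example the Hsieh--Clough--Tocher element in 2D and its 3D analogue (e.g.\ Zhang's $C^1$ tetrahedral element). Such a space has local degrees of freedom consisting of function values and first (and, in 3D, second) derivatives at vertices, together with a single normal-derivative degree of freedom at each edge mid-point (2D) or face mid-point (3D). The strategy is an averaging-type nodal interpolation: for each global degree of freedom $\varphi$ of $W_h$ associated with a geometric object $\tau$ (vertex, edge mid-point, or face mid-point), set
\[
\varphi(\Pi_h v_h) \;=\; \frac{1}{\#\mathcal{M}_h(\tau)} \sum_{K \in \mathcal{M}_h(\tau)} \varphi\bigl(v_h|_K\bigr),
\]
where $\mathcal{M}_h(\tau)$ is the set of elements meeting $\tau$. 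Note the normal-derivative mid-point DOFs of the $C^1$ element are exactly those that Morley already supplies (up to being averaged when $\tau$ lies in the interior), while the vertex DOFs require a choice of surrogate value for derivatives not carried by the Morley element; these too are defined by taking the average of the local evaluations coming from the elements incident to that vertex.

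Next I would establish \eqref{b7}. Fix $K \in \mathcal{M}_h$ and compare $v_h|_K$ with $\Pi_h v_h|_K$ by going through the common set of $C^1$ DOFs on $K$ (which $v_h|_K$ inherits by direct evaluation, since $v_h|_K \in P_2(K)$). For each such DOF $\varphi$, the difference $\varphi(v_h|_K) - \varphi(\Pi_h v_h|_K)$ is a linear combination of jumps $\varphi(v_h|_K) - \varphi(v_h|_{K'})$ across neighbouring elements $K'$, which by Lemma~\ref{lem:new} are bounded by $Ch^{2-s-d/2}(|v_h|_{H^2(K)} + |v_h|_{H^2(K')})$ for the appropriate derivative order $s$ attached to $\varphi$. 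A standard scaling/reference-element argument then converts the DOF-difference bounds into
\[
|v_h - \Pi_h v_h|_{H^m(K)} \;\le\; Ch^{2-m}\,|v_h|_{2,h,\,\omega(K)},
\]
where $\omega(K)$ is the patch of neighbours of $K$; summing over $K$ and using finite overlap gives \eqref{b7}.

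For \eqref{b5}, I would argue pointwise. If the sampling point $x_i$ lies inside some $K$, then $\hat v_h(x_i) = v_h|_K(x_i)$, so the bound follows from the $L^\infty(K)$ estimate $\|v_h - \Pi_h v_h\|_{L^\infty(K)} \le Ch^{2-d/2}|v_h|_{2,h,\,\omega(K)}$, obtained from \eqref{b7} with $m=0$ and the inverse inequality. If $x_i$ lies on a shared face or edge, then $\hat v_h(x_i)$ is the average of $v_h|_{K'}(x_i)$ over $K' \in \mathcal{M}_h(x_i)$, and since $\Pi_h v_h$ is continuous across element interfaces, the same $L^\infty$-bound applied to each $K' \in \mathcal{M}_h(x_i)$ controls $|\hat v_h(x_i) - (\Pi_h v_h)(x_i)|$. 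Summing squares over $i = 1,\dots,n$ and using the uniform-distribution bound $\#\mathbb{T}_K \le Cnh^d$ (as in the proof of Lemma~\ref{lem:3.1}) yields
\[
\|\hat v_h - \Pi_h v_h\|_n^2 \;\le\; \frac{1}{n}\sum_{K \in \mathcal{M}_h} \#\mathbb{T}_K \cdot Ch^{4-d}|v_h|_{2,h,\,\omega(K)}^2 \;\le\; Ch^4 |v_h|_{2,h}^2 .
\]

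The main obstacle will be the 3D case: one must choose a concrete $C^1$-conforming macro-element whose normal-derivative DOFs on faces are compatible with Morley's face DOFs, and verify that averaging the remaining vertex-type DOFs (which involve values of $v_h$ and its first/second derivatives computed one-sidedly from each adjacent element) produces $H^2$-conformity globally while still enjoying the jump-controlled error estimate above. Once the right $C^1$ element is fixed, the remaining work is the scaling bookkeeping already sketched.
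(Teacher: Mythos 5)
Your construction is essentially the paper's: an averaging (enriching) interpolation of the Morley function into a $C^1$-conforming finite element space, with the degree-of-freedom discrepancies expressed as inter-element jumps controlled by Lemma \ref{lem:new} and converted into \eqref{b7} by scaling, and with \eqref{b5} obtained exactly as in the paper from the $L^\infty$/inverse estimate and the count $\#\mathbb{T}_K\le Cnh^d$; the paper merely uses the Argyris element in 2D and Zhang's $C^1$-$P_9$ element in 3D instead of HCT. The one step you gloss over is the ``standard scaling/reference-element argument'': elements carrying midpoint (or face) normal-derivative degrees of freedom are only almost-affine, which is why the paper routes the estimate through the affine-equivalent Hermite triangle of type (5) (resp.\ the $C^0$-$P_9$ element in 3D); you would need to supply this intermediate step or invoke known basis-function bounds for your chosen $C^1$ element.
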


\begin{proof} We will only prove the lemma for the case $d=2$. The case of $d=3$ will be briefly discussed in the appendix of this paper. We will construct $\Pi_hv_h$ by using the Agyris element. We recall \cite[P.71]{Ciarlet} that for any $K\in\cM_h$, Agyris element is a triple $(K,P_K,\Lambda_K)$, where $P_K=P_5(K)$ and the set of degrees of freedom, with the notation in Figure \ref{c2}, $\Lambda_K=\{p(a_i), Dp(a_i)(a_j-a_i), D^2p(a_i)(a_j-a_i,a_k-a_i), \pa_\nu p(b_i),1\le i,j,k\le 3, j\not=i, k\not=i, \forall p\in C^2(K)\}$. Let $X_h$
 be the Agyris finite element space 
\ben
X_h=\{v_h: v_h|_K\in P_5(K), \forall K\in\cM_h, f(v_h|_{K_1})=f(v_h|_{K_2}),\forall f\in \Lambda_{K_1}\cap\Lambda_{K_2}.\}
\een
It is known that $X_h\subset H^2(\Om)$.

\begin{figure}[t]
\begin{center}
\includegraphics[width=10cm]{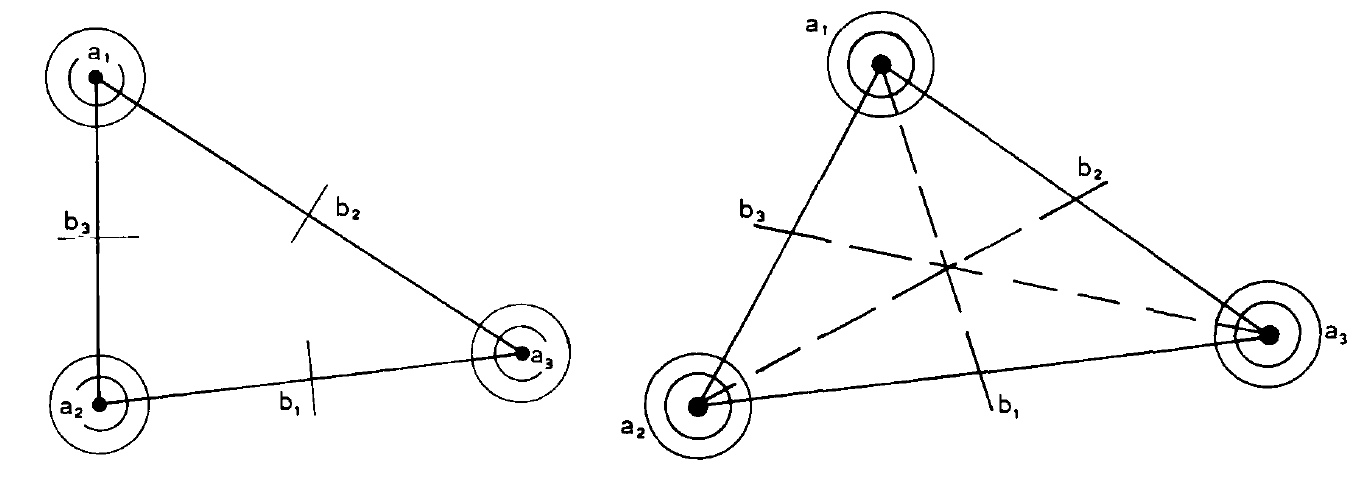}
\end{center}
\caption{The degrees of freedom of Agyris element (left) and Hermite triangle of type (5) (right).}\label{c2}
\end{figure}

We define the operator $\Pi_h$ as follows. For any $v_h\in V_h$,  $w_h:=\Pi_h v_h\in X_h$  such that for any $K\in\cM_h$, $w_h|_K\in P_5(K)$ and
\be
& &\pa^\alpha (w_h|_K)(a_i)=\frac 1{N(a_i)}\sum_{K'\in\cM_h(a_i)} \pa^\alpha(v_h|_{K'})(a_i),\ \ 1\le i\le 3, \ \ |\alpha|\le 2,\label{n1}\\
& &\pa_\nu (w_h|_K)(b_i)=\pa_\nu (v_h|_K)(b_i),\ \ 1\le i\le 3.\label{n2}
\ee
Here $\cM_h(a_i)$ and $N(a_i)$ are defined above \eqref{h1-1}.
To show the estimate \eqref{b7} we follow an idea in \cite[Theorem 6.1.1]{Ciarlet} and use the element Hermite triangle of type (5) \cite[P.102]{Ciarlet}, which is a triple $(K,P_K,\Theta_K)$, where $P_K=P_5(K)$ and the set of degrees of freedom $\Theta_K=\{p(a_i), Dp(a_i)(a_j-a_i), D^2p(a_i)(a_j-a_i,a_k-a_i), Dp(b_i)(a_i-b_i), 1\le i,j,k\le 3, j\not=i,k\not=i, \forall p\in C^2(K)\}$. The finite element space of Hermite triangle of type (5) is $H^1$ conforming and a regular family of Hermite triangle of type (5) is affine-equivalent. For any $K\in\cM_h$, denote by $p_i, p_{ij}, p_{ijk},q_i$ the basis functions associated with the degrees of freedom $p(a_i), Dp(a_i)(a_j-a_i), D^2p(a_i)(a_j-a_i,a_k-a_i), Dp(b_i)(a_i-b_i)$, $1\le i,j,k\le 3, j\not=i, k\not=i$. 
 
For any $v_h\in V_h$, we also define a linear operator $q_h:=\Lambda_h v_h$ as follows: for any $K\in\cM_h$, $q_h|_K\in P_5(K)$ and
\be
& &\pa^\alpha (q_h|_K)(a_i)=\frac 1{N(a_i)}\sum_{K'\in\cM_h(a_i)} \pa^\alpha(v_h|_{K'})(a_i),\ \ 1\le i\le 3, \ \ |\alpha|\le 2,\label{n3}\\
& &D(q_h|_K)(b_i)(a_i-b_i)=D(v_h|_K)(b_i)(a_i-b_i),\ \ 1\le i\le 3.\label{n4}
\ee 
Then from the definition of Morley element and Hermite triangle of type (5), we know that
$\phi_h|_K:=(v_h-q_h)|_K\in P_5(K)$ satisfies
\ben
\phi_h(x)&=&\sum_{i,j=1,2,3,j\not= i}D(\phi_h|_K)(a_i)(a_j-a_i)p_{ij}(x)\\
&+&\sum_{i,j,k=1,2,3,j\not=i,k\not=i}D^2(\phi_h|_K)(a_i)(a_j-a_i,a_k-a_i)p_{ijk}(x).
\een
Since a regular family of Hermite triangle of type (5) is affine-equivalent, by standard scaling argument \cite[Theorem 3.1.2]{Ciarlet}, we obtain easily $|q_i|_{H^m(K)}+|p_i|_{H^m(K)}+|p_{ij}|_{H^m(K)}+|p_{ijk}|_{H^m(K)}\le Ch_K^{1-m}$, $m=0,1,2$.
Thus, for $m=0,1,2$,
\beq\label{b6}
|\phi_h|_{H^m(K)}\le Ch_K^{1-m}\left(\sum_{i=1}^3\sum_{1\le |\alpha|\le 2}h^{|\alpha|}|\pa^\alpha(v_h|_K)(a_i)-\pa^\alpha (q_h|_K)(a_i)|^2\right)^{1/2}.
\eeq
By Lemma \ref{lem:new} and the fact that $\pa^\alpha (q_h|_K)(a_i)$ is the local average of $\pa^\alpha v_h$ over elements around $a_i$ in \eqref{n3}
\ben
|\pa^\alpha(v_h|_K)(a_i)-\pa^\alpha (q_h|_K)(a_i)|\le Ch^{1-|\alpha|}\left(\sum_{K'\in\cM_h(a_i)}|v_h|_{H^2(K')}^2\right)^{1/2},\ \ \ \ \forall 1\le |\alpha|\le 2.
\een
Inserting above estimate into \eqref{b6}, we get 
\beq\label{b8}
|v_h-q_h|_{H^m(K)}\le Ch^{2-m}\left(\sum_{K'\in\cM_h(K)}|v_h|_{H^2(K')}^2\right)^{1/2},\ \ m=0,1,2. 
\eeq
By \eqref{n1}-\eqref{n4} we know that $q_h-w_h\in P_5(K)$ and satisfies 
\ben
q_h(x)-w_h (x)&=& \sum\limits_{i=1}^3 D(q_h|_K-w_h|_K)(b_i)(a_i-b_i) q_i(x).
\een
On the other hand, for $1\le i\le 3$,
\ben
D(q_h|_K-w_h|_K)(b_i)(a_i-b_i)=\pa_\nu(q_h|_K-v_h|_K)(b_i)[(a_i-b_i)\cdot\nu],
\een
since $\pa_\nu (w_h|_K)(b_i)=\pa_\nu(v_h|_K)(b_i)$ by \eqref{n2} and the tangential derivative of $(q_h|_K-w_h|_K)$ vanishes as a consequence of \eqref{n1} and \eqref{n3}. Since $|q_i|_{H^m(K)}\le Ch_K^{1-m}$ for $m=0,1,2$, we obtain then
\be
|q_h-w_h|_{H^m(K)}&\le&Ch^{2-m}\left(\sum^3_{i=1}|\pa_\nu(q_h|_K-v_h|_K)(b_i)|^2\right)^{1/2}\nn\\
&\le&Ch^{2-m}\left(\sum_{K'\in\cM_h(K)}|v_h|_{H^2(K')}^2\right)^{1/2},\ \ m=0,1,2,\label{n5}
\ee
where in the second inequality we have used the fact that by the inverse estimate and \eqref{b8},
\ben
|\pa_\nu(q_h|_K-v_h|_K)(b_i)|\le |q_h-v_h|_{W^{1,\infty}(K)}&\le&Ch^{-1}_K|q_h-v_h|_{H^1(K)}\\
&\le&C\left(\sum_{K'\in\cM_h(K)}|v_h|_{H^2(K')}^2\right)^{1/2}.
\een  
Combining \eqref{b8} and \eqref{n5} shows \eqref{b7}.

To show \eqref{b5}, we use the notation in the proof of Lemma \ref{lem:3.1}, the inverse estimate and \eqref{b7} to get
\ben
\|\hat v_h-w_h\|_n^2\le\frac Cn\sum_{K\in\cM_h}\#\mathbb{T}_K \|v_h-w_h\|_{L^\infty(K)}^2\le C\|v_h-w_h\|^2_{L^2(\Om)}\le Ch^4|v_h|_{2,h}^2.
\een
This completes the proof.
\end{proof}

For any function $v$ which is piecewise in $C^2(K)$ for any $K\in\cM_h$, we use the convenient energy norm
\ben
\lj v\rj_{h}=\left(\lambda_n |v|_{2,h}^2+\|\hat v\|_n^2\right)^{1/2}.
\een
Here $\hat v(x_i)$, $i=1,2,\cdots,n$, is defined as in \eqref{h1-1}, that is, $\hat v(x_i)$ is the local average of all $v|_{K'}(x_i)$, where $K'\in\cM_h$ such that $x_i\in K'$.

\begin{theorem}\label{thm:3.1}
Let $u_n\in H^2(\Om)$ be the unique solution of \eqref{b1} and $u_h\in V_h$ be the solution of \eqref{d1}.
Then there exist constants $\lambda_0 > 0$ and $C>0$ such that for any $\lambda_n \leq \lambda_0$ and $n\lambda_n^{d/4}\ge 1$,
\beq\label{d2}
\mathbb{E}\big[\|u_0-\hat u_h\|_n^2\big]\le C(\lambda_n+h^4)|u_0|^2_{H^2(\Om)}+C\left[1+\frac{h^4}{\lam_n}+\left(\frac{h^4}{\lambda_n}\right)^{1-d/4}\right]\frac{\sigma^2}{n\lambda_n^{d/4}}.
\eeq
In particular, if $h^4\le C\lambda_n$, we have
\beq\label{d3}
\mathbb{E}\big[\|u_0-\hat u_h\|_n^2\big]\le C\lambda_n|u_0|_{H^2(\Om)}^2+\frac{C\sigma^2}{n\lambda_n^{d/4}}.
\eeq
\end{theorem}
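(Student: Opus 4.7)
The plan is to run a Strang-type argument for the nonconforming Morley method, using the smoothing operator $\Pi_h:V_h\to H^2(\Om)$ from Lemma \ref{lem:3.2} as a bridge between the discrete and continuous variational formulations. First I would split the error by the triangle inequality
\ben
\|u_0-\hat u_h\|_n\le \|u_0-u_n\|_n+\|u_n-\widehat{I_hu_n}\|_n+\|\hat\xi_h\|_n,\quad \xi_h:=u_h-I_hu_n\in V_h,
\een
so that the first piece is controlled directly by Theorem \ref{thm:2.1}, the second by combining \eqref{b4} with Theorem \ref{thm:2.1}, and what remains is to bound $\mathbb{E}[\|\hat\xi_h\|_n^2]$, which I would pass through the energy norm $\lj\xi_h\rj_h$.

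To estimate $\lj\xi_h\rj_h$, I would start from the coercivity identity $\lj\xi_h\rj_h^2=\lam_n a_h(\xi_h,\xi_h)+\|\hat\xi_h\|_n^2$, apply the discrete equation \eqref{d1} with test $\xi_h$ to replace $u_h$ by $y$, and then split $(y,\hat\xi_h)_n=(y,\Pi_h\xi_h)_n+(y,\hat\xi_h-\Pi_h\xi_h)_n$. Since $\Pi_h\xi_h\in H^2(\Om)$, the continuous equation \eqref{b1} eliminates the first of these, and using $a_\Om(u_n,\Pi_h\xi_h)=a_h(u_n,\Pi_h\xi_h)$ together with $y_i=u_0(x_i)+e_i$ yields the error identity
\ben
\lj\xi_h\rj_h^2 &=& \lam_n a_h(u_n,\Pi_h\xi_h-\xi_h)+\lam_n a_h(u_n-I_hu_n,\xi_h)+(u_n-u_0,\Pi_h\xi_h-\hat\xi_h)_n\\
&&{}+(u_n-\widehat{I_hu_n},\hat\xi_h)_n+(e,\hat\xi_h-\Pi_h\xi_h)_n.
\een
Each of the five terms I would then bound pointwise in $\omega$ by Cauchy--Schwarz and Young's inequality: \eqref{b7} gives $|\Pi_h\xi_h-\xi_h|_{2,h}\le C|\xi_h|_{2,h}$, \eqref{b3} gives $|u_n-I_hu_n|_{2,h}\le C|u_n|_{H^2(\Om)}$, \eqref{b5} gives $\|\Pi_h\xi_h-\hat\xi_h\|_n\le Ch^2|\xi_h|_{2,h}$, and \eqref{b4} gives $\|u_n-\widehat{I_hu_n}\|_n\le Ch^2|u_n|_{H^2(\Om)}$. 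After absorbing the resulting multiples of $\lam_n|\xi_h|_{2,h}^2$ and $\|\hat\xi_h\|_n^2$ into the left-hand side, one is left with
\ben
\lj\xi_h\rj_h^2\le C(\lam_n+h^4)|u_n|^2_{H^2(\Om)}+\frac{Ch^4}{\lam_n}\big(\|u_n-u_0\|_n^2+\|e\|_n^2\big).
\een

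Finally, I would take expectations, use $\mathbb{E}[\|e\|_n^2]\le\sigma^2$, and substitute the Theorem \ref{thm:2.1} bounds on $\mathbb{E}[|u_n|_{H^2(\Om)}^2]$ and $\mathbb{E}[\|u_n-u_0\|_n^2]$; collecting the resulting contributions produces the $(\lam_n+h^4)|u_0|^2_{H^2(\Om)}$ piece together with the first two bracketed terms in \eqref{d2}. The leftover noise-only contribution $Ch^4\sigma^2/\lam_n$ I would rewrite as $C(h^4/\lam_n)^{1-d/4}\sigma^2/(n\lam_n^{d/4})$ using the mesh-density relation $nh^d\lc 1$ guaranteed by the uniform distribution assumption on $\mathbb{T}$, which accounts for the third bracketed term; \eqref{d3} then follows at once since the bracket collapses to $O(1)$ whenever $h^4\le C\lam_n$. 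The main obstacle I expect is the clean derivation of the error identity, because the nonconformity forces one to keep careful track of the three distinct discrepancies $\xi_h-\Pi_h\xi_h$, $u_n-I_hu_n$, and $u_n-\widehat{I_hu_n}$ produced by alternating between the Morley test $\xi_h$ and its $H^2$-smoothed counterpart $\Pi_h\xi_h$; a secondary subtlety, worth flagging, is that Young's inequality in the noise term must be applied pointwise in $\omega$ so that the elementary bound $\mathbb{E}[\|e\|_n^2]\le\sigma^2$ can be invoked despite $\xi_h$ itself depending on the noise.
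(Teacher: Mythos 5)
Your decomposition and the error identity for $\lj\xi_h\rj_h^2$ are correct, and the bounds for the four deterministic terms via \eqref{b3}, \eqref{b4}, \eqref{b7}, \eqref{b5} reproduce, in hand-derived form, the paper's use of the Strang lemma together with Lemma \ref{lem:3.2} and Theorem \ref{thm:2.1}. The genuine gap is in the noise term $(e,\hat\xi_h-\Pi_h\xi_h)_n$. Bounding it by $\|e\|_n\,\|\hat\xi_h-\Pi_h\xi_h\|_n$ and then invoking $\mathbb{E}[\|e\|_n^2]\le\sigma^2$ throws away the $1/n$ variance gain of the empirical inner product: you are left with a contribution $C\sigma^2 h^4/\lambda_n$, which is of order $\sigma^2$ when $h^4\simeq\lambda_n$ and does not decay as $n\to\infty$, so it can give neither \eqref{d2} nor \eqref{d3}. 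Your attempted repair, rewriting $\sigma^2h^4/\lambda_n$ as the third bracketed term of \eqref{d2} via ``$nh^d\lesssim 1$'', does not work: the uniform-distribution assumption concerns the scattered data $\mathbb{T}$, not the finite element mesh, and no relation $h\lesssim n^{-1/d}$ is assumed anywhere. In the intended regime ($h=O(\lambda_n^{1/4})$, $n\lambda_n^{d/4}\ge 1$, and indeed for the FEM to have fewer degrees of freedom than data points) one has $nh^d\gtrsim 1$, the opposite inequality (this is also what makes $\#\mathbb{T}_K\le Cnh^d$ in Lemma \ref{lem:3.1} sensible). Equivalently, the paper's third term equals $\sigma^2 h^{4-d}/(n\lambda_n)$, which is smaller than your $\sigma^2h^4/\lambda_n$ by precisely the factor $(nh^d)^{-1}$.

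What is missing is a decoupling of the noise from the random function $\xi_h$. The paper bounds $(e,\hat\xi_h-\Pi_h\xi_h)_n\le \lj\xi_h\rj_h\,\sup_{0\neq v_h\in V_h}|(e,\hat v_h-\Pi_hv_h)_n|/\lj v_h\rj_h$ and estimates the expectation of the supremum over the finite-dimensional space $V_h$: for each fixed deterministic $v_h$, independence of the $e_i$ gives $\mathbb{E}[|(e,\hat v_h-\Pi_hv_h)_n|^2]=\sigma^2 n^{-1}\|\hat v_h-\Pi_hv_h\|_n^2\le C\sigma^2 n^{-1}h^4|v_h|_{2,h}^2$ (note the crucial factor $n^{-1}$), and taking the supremum costs at most the dimension $N_h\le Ch^{-d}$, yielding $C\sigma^2h^{4-d}/(n\lambda_n)$ as in \eqref{d7}; this is exactly the third bracketed term in \eqref{d2} and is absorbed into $C\sigma^2/(n\lambda_n^{d/4})$ once $h^4\le C\lambda_n$. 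Without such a decoupling argument (or the sharper entropy/chaining estimate of Lemma \ref{lem:4.8} used later for the stochastic convergence), your pointwise Cauchy--Schwarz treatment cannot close the proof.
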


\begin{proof} We start by using the Strang lemma \cite{Ciarlet}
\beq\label{d4}
\lj u_n-\hat u_h\rj_h\le C\inf_{v_h\in V_h}\lj u_n-\hat v_h\rj_h+C\sup_{0\not= v_h\in V_h}\frac{|\lam_n a_h(u_n,v_h)+(u_n-y,\hat v_h)_n|}{\lj v_h\rj_h}.
\eeq
By Lemma \ref{lem:3.1} we have
\beq\label{d5}
\inf_{v_h\in V_h}\lj u_n-\hat v_h\rj_h\le C(\lam_n^{1/2}+h^2)|u_n|_{H^2(\Om)}.
\eeq
Since for any $v_h\in V_h$, $\Pi_hv_h\in H^2(\Om)$, by \eqref{b1} and the fact that $y_i=u_0(x_i)+e_i$, $i=1,2,\cdots n$, to obtain
\ben
& &\lam_n a_h(u_n,v_h)+(u_n-y,\hat v_h)_n\\
&=&\lam_n a_h(u_n,v_h-\Pi_hv_h)+(u_n-y,\hat v_h-\Pi_hv_h)_n\\
&\le&\lam_n|u_n|_{H^2(\Om)}|v_h-\Pi_hv_h|_{2,h}+\|u_n-u_0\|_n\|\hat v_h-\Pi_hv_h\|_n+(e,\hat v_h-\Pi_hv_h)_n.
\een
Now by using Lemma \ref{lem:3.2} we have
\be\label{d6}
& &\sup_{0\not= v_h\in V_h}\frac{|\lam_n a_h(u_n,v_h)+(u_n-y,\hat v_h)_n|}{\lj v_h\rj_h}\nn\\
&\le&C\lam_n^{1/2}|u_n|_{H^2(\Om)}+C\frac{h^2}{\lam_n^{1/2}}\|u_n-u_0\|_n+\sup_{0\not= v_h\in V_h}\frac{|(e,\hat v_h-\Pi_hv_h)_n|}{\lj v_h\rj_{h}}.
\ee
Since $e_i$, $i=1,2,\cdots, n$, are independent and identically distributed random variables, we have
\ben
\mathbb{E}\big[|(e,\hat v_h-\Pi_hv_h)_n|^2\big]=\sigma^2 n^{-1}\|\hat v_h-\Pi_hv_h\|_n^2\le C\sigma^2 n^{-1}h^4|v_h|_{2,h}^2,
\een
where we have used Lemma \ref{lem:3.2} in the last inequality. 

Let $N_h$ be the dimension of $V_h$ which satisfies $N_h\le Ch^{-d}$ since the mesh is quasi-uniform. Recall that if $\{X_i\}_{i=1}^{N_h}$ are random variables, $\E[\sup_{1\le i\le N_h}|X_i|]\le\sum^{N_h}_{i=1}\E [|X_i|]$. We have then
\beq\label{d7}
\mathbb{E}\left[\sup_{0\not= v_h\in V_h}\frac{|(e,v_h-\Pi_hv_h)_n|^2}{\lj v_h\rj_{h}^2}\right]\le N_h\cdot \sup_{0\not= v_h\in V_h}\mathbb{E}\left[\frac{|(e,v_h-\Pi_hv_h)_n|^2}{\lj v_h\rj_{h}^2}\right]\le C\frac{\sigma^2 h^{4-d}}{n\lam_n}.
\eeq
Combining \eqref{d4}-\eqref{d7} we obtain
\ben
\mathbb{E}\big[\lj u_n-\hat u_h\rj_h^2\big]\le C\lam_n\mathbb{E}\big[|u_n|_{H^2(\Om)}^2\big]+C\frac{h^4}{\lam_n}\mathbb{E}\big[\|u_n-u_0\|_n^2\big]
+C\frac{\sigma^2 h^{4-d}}{n\lam_n}.
\een
This completes the proof by using Theorem \ref{thm:2.1}.
\end{proof}

\section{Stochastic convergence}

In this section we study the stochastic convergence of the error $\|u_0-\hat u_h\|_n$ which characterizes the tail property of $\mathbb{P}(\|u_0-\hat u_h\|_n\ge z)$ for $z>0$. We assume the noises $e_i$, $i=1,2,\cdots,n$, are
independent and identically distributed sub-Gaussian random variables with parameter $\sigma>0$. A random variable $X$ is sub-Gaussion with parameter $\sigma$ if it satisfies
\beq\label{e1}
\mathbb{E}\left[e^{\lam(X-\mathbb{E}[X])}\right]\le e^{\frac 12\sigma^2\lam^2},\ \ \ \ \forall\lam\in\R.
\eeq
The probability distribution function of a sub-Gaussion random variable has a exponentially decaying tail, that is, if $X$ is a sub-Gaussion random variable, then
\beq\label{gg1}
\mathbb{P}(|X-\E [X]|\ge z)\le 2e^{-\frac 12 z^2/\sigma^2},\ \ \forall z>0.
\eeq
In fact, by Markov inequality, for any $\lam>0$,
\ben
\mathbb{P}(X-\E [X]\ge z)=\mathbb{P}(\lam(X-\E [X])\ge \lam z)\le e^{-\lam z}\E [e^{\lam (X-\E [X])}]\le e^{-\lam z-\frac 12\sigma^2\lam^2}.
\een
By taking $\lam=z/\sigma^2$ yields $\mathbb{P}(X-\E [X]\ge z)\le e^{-\frac 12 z^2/\sigma^2}$. Similarly, one can prove $\mathbb{P}(X-\E [X]\le -z)\le e^{-\frac 12 z^2/\sigma^2}$. This shows \eqref{gg1}.

\subsection{Stochastic convergence of the thin plate splines}
We will use several tools from the theory of empirical processes \cite{Vaart, Geer} for our analysis. We start by recalling the definition of Orlicz norm. Let $\psi$ be a monotone increasing convex function satisfying $\psi(0)=0$.
Then the Orilicz norm $\|X\|_\psi$ of a random variable $X$ is defined as
\beq\label{e2}
\|X\|_\psi=\inf\left\{C>0:\mathbb{E}\left[\psi\left(\frac{|X|}C\right)\right]\le 1\right\}.
\eeq
By using Jensen inequality, it is easy to check $\|X\|_\psi$ is a norm. In the following we will use the $\|X\|_{\psi_2}$ norm with
$\psi_2(t)=e^{t^2}-1$ for any $t>0$. By definition we know that
\beq\label{e3}
\mathbb{P}(|X|\ge z)\le 2\,e^{-z^2/\|X\|_{\psi_2}^2},\ \ \ \ \forall z>0.
\eeq
The following lemma is from \cite[Lemma 2.2.1]{Vaart} which shows the inverse of this property.
\begin{lemma}\label{lem:4.1}
If there exist positive constants $C,K$ such that $\mathbb{P}(|X|>z)\le Ke^{-Cz^2},\ \forall z>0$, then $\|X\|_{\psi_2}\le\sqrt{(1+K)/C}$.
\end{lemma}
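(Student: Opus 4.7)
The plan is to verify the bound directly from the definition of the Orlicz norm. Set $a = \sqrt{(1+K)/C}$; it suffices to show $\E[\psi_2(|X|/a)] \le 1$, i.e.\ $\E[e^{X^2/a^2}] \le 2$. Once that inequality is in hand, the infimum defining $\|X\|_{\psi_2}$ is $\le a$ and the lemma follows.

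To evaluate $\E[e^{X^2/a^2} - 1]$, I would apply the standard layer-cake identity: for a non-decreasing $C^1$ function $f$ with $f(0)=0$,
\[
\E[f(|X|)] = \int_0^\infty f'(z)\,\mathbb{P}(|X|>z)\,dz.
\]
Taking $f(z) = e^{z^2/a^2}-1$ so that $f'(z) = (2z/a^2)\,e^{z^2/a^2}$, and inserting the hypothesized tail bound $\mathbb{P}(|X|>z)\le K e^{-Cz^2}$, I get
\[
\E\!\left[e^{X^2/a^2}-1\right] \le K\int_0^\infty \frac{2z}{a^2}\,e^{-(C-1/a^2)z^2}\,dz.
\]

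The choice $a^2 = (1+K)/C$ ensures $C - 1/a^2 = C - C/(1+K) = CK/(1+K) > 0$, so the integral converges and evaluates to
\[
\int_0^\infty \frac{2z}{a^2}\,e^{-(C-1/a^2)z^2}\,dz = \frac{1/a^2}{C-1/a^2} = \frac{1}{K}.
\]
Hence $\E[e^{X^2/a^2}-1] \le K\cdot (1/K) = 1$, which is exactly $\E[\psi_2(|X|/a)]\le 1$, giving $\|X\|_{\psi_2}\le a = \sqrt{(1+K)/C}$.

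The only real subtlety is justifying the layer-cake identity; this is standard (Fubini on $\int_0^{|X|} f'(z)\,dz$ together with nonnegativity), so there is no substantive obstacle. The entire argument is a one-line choice of $a$ followed by a direct computation, so I would expect the write-up to be essentially the two displays above.
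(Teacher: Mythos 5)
Your proof is correct: with $a^2=(1+K)/C$ the layer-cake/Fubini computation gives $\mathbb{E}[\psi_2(|X|/a)]\le K\cdot\frac{1}{Ca^2-1}=1$, so $\|X\|_{\psi_2}\le a$ follows directly from the definition \eqref{e2}. The paper offers no proof of its own, citing \cite[Lemma 2.2.1]{Vaart}, and the argument there is exactly this computation (written after the change of variables $s=z^2/a^2$), so your approach coincides with the standard one.
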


Let $T$ be a semi-metric space with the semi-metric $d$ and $\{X_t:t\in T\}$ be a random process indexed by $T$. Then the random process $\{X_t:t\in T\}$ is called sub-Gaussian if
\beq\label{e41}
\mathbb{P}(|X_s-X_t|>z)\le 2e^{-\frac 12 z^2/d(s,t)^2},\ \ \ \ \forall s,t\in T, \ \ z>0.
\eeq
For a semi-metric space $(T,d)$, an important quantity to characterize the complexity of the set $T$ is the entropy which we now introduce. The covering number $N(\vep,T,d)$ is the minimum number of $\vep$-balls that cover $T$. A set is called $\vep$-separated if the distance of any two points in the set is strictly greater than $\vep$. The packing number $D(\vep,T,d)$ is the maximum number of $\vep$-separated points in $T$. $\log N(\vep,T,d)$ is called the covering entropy and $\log D(\vep,T,d)$ is called the packing entropy. It is easy to check that \cite[P.98]{Vaart}
\beq\label{f1}
N(\vep,T,d)\le D(\vep,T,d)\le N(\frac\vep 2,T,d).
\eeq

The following maximal inequality \cite[Section 2.2.1]{Vaart} plays an important role in our analysis.
\begin{lemma}\label{lem:4.2}
If $\{X_t:t\in T\}$ is a separable sub-Gaussian random process, then 
\ben
\|\sup_{s,t\in T}|X_s-X_t|\|_{\psi_2}\le K\int^{\diam\, T}_0\sqrt{\log D(\vep,T,d)}\ d\vep.
\een
Here $K>0$ is some constant.
\end{lemma}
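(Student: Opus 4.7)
The plan is to establish the maximal inequality via the standard generic chaining argument due to Dudley. First I would build a nested sequence of finite discretizations $T_0\subset T_1\subset T_2\subset\cdots\subset T$, where $T_k$ is chosen as a maximal $\vep_k$-separated subset of $T$ with $\vep_k=2^{-k}\diam\,T$. By the definition of the packing number, $\# T_k\le D(\vep_k,T,d)$, and maximality guarantees the existence of a projection $\pi_k:T\to T_k$ with $d(t,\pi_k(t))\le\vep_k$. Fixing an arbitrary $t_0\in T_0$ and exploiting separability of the process, I would write, for each $t$, the almost sure chaining identity $X_t-X_{t_0}=\sum_{k\ge 1}(X_{\pi_k(t)}-X_{\pi_{k-1}(t)})$, whose successive links satisfy $d(\pi_k(t),\pi_{k-1}(t))\le 3\vep_{k-1}$ by the triangle inequality.

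The key auxiliary fact I would invoke is a finite $\psi_2$-maximal inequality: if $Z_1,\ldots,Z_N$ satisfy $\|Z_i\|_{\psi_2}\le M$, then $\|\max_{i\le N}|Z_i|\|_{\psi_2}\le KM\sqrt{\log(1+N)}$, verified by applying $\psi_2$ to the maximum, invoking subadditivity of expectation, and solving for the constant. Applied to the $k$-th layer, the sub-Gaussian hypothesis \eqref{e41} together with Lemma \ref{lem:4.1} gives $\|X_{\pi_k(t)}-X_{\pi_{k-1}(t)}\|_{\psi_2}\le K\vep_{k-1}$, and there are at most $\#T_k\cdot\#T_{k-1}\le D(\vep_k,T,d)^2$ such increments across $t\in T$. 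Hence the $k$-th link contributes at most $K\vep_{k-1}\sqrt{\log D(\vep_k,T,d)}$ to the $\psi_2$ norm of the supremum.

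Summing the links using the triangle inequality for $\|\cdot\|_{\psi_2}$ yields the discrete bound $\sum_{k\ge 1}\vep_{k-1}\sqrt{\log D(\vep_k,T,d)}$. Since the $\vep_k$ are geometric and $\vep\mapsto\log D(\vep,T,d)$ is non-increasing, each summand is bounded by a fixed multiple of $\int_{\vep_{k+1}}^{\vep_k}\sqrt{\log D(\vep,T,d)}\,d\vep$, and telescoping reproduces the Dudley integral $\int_0^{\diam T}\sqrt{\log D(\vep,T,d)}\,d\vep$. Finally, passing from $\sup_t|X_t-X_{t_0}|$ to $\sup_{s,t}|X_s-X_t|$ costs only a factor of two.

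The main obstacle I expect is the careful management of the chain: one must verify that the series defining $X_t-X_{t_0}$ converges almost surely uniformly in $t$ (this uses separability together with the geometric decay of link norms) and that the supremum over the limit $T$ is indeed controlled by the increasing suprema over the finite $T_k$. A secondary but pivotal technical point is the finite $\psi_2$-maximal inequality with its logarithmic factor; I would prove it by choosing $c$ large enough that $\E\bigl[\psi_2\bigl(\max_i|Z_i|/(M\sqrt{c\log(1+N)})\bigr)\bigr]\le N(1+N)^{-c}\le 1$, which immediately gives the required Orlicz bound.
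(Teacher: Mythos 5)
The paper does not actually prove this lemma: it is quoted from van der Vaart--Wellner \cite[Section 2.2.1]{Vaart}, where it is established by precisely the Dudley chaining argument you outline. Your skeleton is the standard one and is sound: nested maximal $\vep_k$-separated nets with $\#T_k\le D(\vep_k,T,d)$, link lengths $d(\pi_k(t),\pi_{k-1}(t))\le\vep_k+\vep_{k-1}$, at most $\#T_k\cdot\#T_{k-1}$ distinct increments per level, sub-Gaussianity \eqref{e41} plus Lemma \ref{lem:4.1} for the $\psi_2$ norm of each link, geometric summation against the entropy integral (using that $D(\cdot,T,d)$ is non-increasing and that $D(\vep_k,T,d)\ge 2$ for $k\ge1$ so the $\log(1+D^2)$ factor is absorbed into $\log D$), separability to pass from the finite nets to the full supremum, and a factor $2$ to go from $\sup_t|X_t-X_{t_0}|$ to $\sup_{s,t}|X_s-X_t|$.

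The step that fails as written is your verification of the finite $\psi_2$-maximal inequality, which you rightly call pivotal. From $\|Z_i\|_{\psi_2}\le M$ alone, rescaling the argument by $\lam=\sqrt{c\log(1+N)}$ does not make the individual expectations geometrically small: the tail bound $\mathbb{P}(|Z_i|>z)\le 2e^{-z^2/M^2}$ gives only $\E\big[\psi_2\big(|Z_i|/(\lam M)\big)\big]=\int_0^\infty\mathbb{P}\big(|Z_i|>\lam M\sqrt{\log(1+t)}\big)\,dt\le 2/(\lam^2-1)$, which is of order $1/(c\log(1+N))$, not $(1+N)^{-c}$ (and for a variable whose tail saturates this bound, the expectation really is of order $1/\lam^2$, so your claimed intermediate inequality is false, not merely unproven). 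Plain subadditivity over $i$ then yields only $O\big(N/(c\log(1+N))\big)$, which cannot be made $\le 1$ with a universal $c$. The lemma itself is true (it is Lemma 2.2.2 in \cite{Vaart}); to repair your argument either use their proof, which exploits $\psi_2(x)\psi_2(y)\le\psi_2(cxy)$ for $x,y\ge1$ together with a truncation of the maximum, or argue directly from the union bound $\mathbb{P}(\max_i|Z_i|>z)\le\min\big(1,\,2Ne^{-z^2/M^2}\big)$: writing $\E\big[\psi_2\big(\max_i|Z_i|/(\lam M)\big)\big]=\int_0^\infty\mathbb{P}\big(\max_i|Z_i|>\lam M\sqrt{\log(1+t)}\big)\,dt$ and splitting the integral at the point $t_0$ where $2N(1+t_0)^{-\lam^2}=1$ gives the bound $t_0+(1+t_0)/(\lam^2-1)$, which is $\le 1$ for $\lam=\sqrt{c\log(1+N)}$ with a universal $c$ since $\log(2N)/\log(1+N)\le 2$. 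Note that applying Lemma \ref{lem:4.1} naively to the union bound would only give the lossy factor $\sqrt{N}$; the truncation or integral split is exactly what produces $\sqrt{\log(1+N)}$. With that substitution the rest of your chaining proof goes through and coincides with the proof of the cited result.
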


The following result on the estimation of the entropy of Sobolev spaces is due to Birman-Solomyak \cite{Birman}.
\begin{lemma}\label{lem:4.3}
Let $Q$ be the unit square in $\R^d$ and $SW^{\alpha,p}(Q)$ be the unit sphere of the Sobolev space $W^{\alpha,p}(Q)$,
where $\alpha> 0, p\ge 1$. Then for $\vep>0$ sufficient small, the entropy
\ben
\log N(\vep, SW^{\alpha,p}(Q), \|\cdot\|_{L^q(Q)})\le C\vep^{-d/\alpha},
\een
where if $\alpha p>d$, $1\le q\le\infty$, otherwise if $\alpha p\le d$, $1\le q\le q^*$ with $q^*=p(1-\alpha p/d)^{-1}$.
\end{lemma}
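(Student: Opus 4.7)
The bound stated is the classical Birman--Solomyak entropy estimate for Sobolev embeddings, and my plan is to follow the multiscale polynomial-approximation strategy that underlies all proofs of this kind.

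First I would fix a dyadic scale $h = 2^{-k}$ and partition $Q$ into $N = h^{-d}$ congruent subcubes $\{Q_j\}$. For any $u$ in the unit sphere of $W^{\alpha,p}(Q)$, I would form a piecewise polynomial approximation $\pi u$ by replacing $u$ on each $Q_j$ by its local polynomial projection of degree less than $m = \lceil \alpha \rceil$. A scaled Bramble--Hilbert / Jackson-type inequality applied cell-by-cell yields
$$ \|u - \pi_j u\|_{L^q(Q_j)} \le C\, h^{\alpha - d(1/p - 1/q)_+}\, \|u\|_{W^{\alpha,p}(Q_j)}, $$
which, after summation using the inclusion $\ell^p \hookrightarrow \ell^q$ (when $q \ge p$) or H\"older's inequality (when $q < p$), combined with the cellwise subadditivity of the $W^{\alpha,p}$ seminorm, gives a global error bound of order $h^{\alpha - d(1/p - 1/q)_+}$.

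Next, the piecewise polynomial $\pi u$ lies in a finite-dimensional space of dimension $O(h^{-d})$, whose coefficients are constrained by the $W^{\alpha,p}$ norm of $u$ through a Bernstein (inverse) inequality on each cube. Quantizing these coefficients at a resolution chosen to match the approximation error would then produce an $\epsilon$-net for $SW^{\alpha,p}(Q)$ of cardinality at most $2^{C h^{-d}}$, with $h$ tuned so that the approximation and quantization errors are both $O(\epsilon)$.

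The main obstacle, and the technical heart of Birman--Solomyak, is to recover the sharp exponent $d/\alpha$ uniformly in $q$, rather than the weaker $d/[\alpha - d(1/p-1/q)_+]$ that the single-scale argument sketched above produces. This requires a telescoping multiscale decomposition $u = \sum_k (\pi^{(k+1)} u - \pi^{(k)} u)$, in which the block at scale $2^{-k}$ has weighted $\ell^p$-norm controlled by the $W^{\alpha,p}$ norm through the Jackson--Bernstein (equivalently, atomic or wavelet) characterization of Sobolev (Besov) spaces. Performing $\epsilon$-quantizations independently at each scale with tolerances that relax on finer scales and aggregating then yields the claimed bound $\log N(\epsilon, SW^{\alpha,p}(Q), \|\cdot\|_{L^q(Q)}) \le C \epsilon^{-d/\alpha}$. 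The limiting case $q = q^*$ when $\alpha p < d$ is recovered either by interpolation from $q < q^*$ or by invoking a refined Sobolev--Orlicz substitute for the endpoint embedding.
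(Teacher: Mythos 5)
The paper does not prove this lemma at all: it is quoted verbatim as a known result of Birman and Solomyak \cite{Birman}, and the subsequent argument (Lemma \ref{lem:4.4}) simply invokes it after reducing $S_{\de,\rho}(\Om)$ to a ball in $H^2(\Om)$. Your outline is therefore not in conflict with anything in the paper; it is essentially a reconstruction of the cited classical proof, and its architecture is the right one: dyadic partition of $Q$, cell-wise polynomial projection of degree $<\lceil\alpha\rceil$ with a Jackson/Bramble--Hilbert error of order $h^{\alpha-d(1/p-1/q)_+}$, quantization of the $O(h^{-d})$ coefficients, and then the multiscale telescoping to upgrade the exponent from $d/[\alpha-d(1/p-1/q)_+]$ to the sharp $d/\alpha$. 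Two caveats if you intend this as a complete proof rather than a roadmap. First, the step you yourself identify as the technical heart --- choosing scale-dependent quantization tolerances and summing $\log$(number of choices) over scales so that the total is $O(\vep^{-d/\alpha})$ while the accumulated error stays $O(\vep)$ --- is only described, not executed; this is exactly where the Birman--Solomyak bookkeeping (or, in modern language, the Besov/wavelet coefficient counting) lives, and without it the argument only delivers the weaker exponent. Second, the endpoint cases need care: for $q=\infty$ (the case actually used in this paper, since $\alpha=2$, $p=2$, $d\le 3$ gives $\alpha p>d$) the cell-wise estimate must go through the local Sobolev embedding into $L^\infty$ on each subcube with the correct scaling, and for $\alpha p\le d$ at $q=q^*$ your proposed interpolation from $q<q^*$ does not by itself give the endpoint uniformly, so you would need the refined endpoint argument you allude to. For the purposes of this paper, simply citing \cite{Birman}, as the authors do, is the intended resolution.
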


For any $\de>0, \rho>0$, define
\beq\label{f3}
S_{\de,\rho}(\Om):=\{u\in H^2(\Om): \|u\|_n\le\de,|u|_{H^2(\Om)}\le\rho\}.
\eeq
The following lemma estimates the entropy of the set $S_{\de,\rho}(\Om)$.
\begin{lemma}\label{lem:4.4} There exists a constant $C$ independent of $\de,\rho,\vep$ such that
\ben
\log N(\vep, S_{\de,\rho}(\Om), \|\cdot\|_{L^\infty(\Om)})\le C\left(\frac{\rho+\de}\vep\right)^{d/2}.
\een
\end{lemma}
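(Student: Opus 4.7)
The plan is to reduce the estimate to the Birman--Solomyak entropy bound (Lemma 4.3) applied to the unit ball of $H^2$ on a reference cube. The first step is to promote the partial information $\|u\|_n\le\de$, $|u|_{H^2(\Om)}\le\rho$ into a genuine $H^2(\Om)$ bound. For this I invoke the first inequality in \eqref{f2}, which gives $\|u\|_{L^2(\Om)}\le C(\|u\|_n+h_{\max}^2|u|_{H^2(\Om)})\le C(\de+\rho)$ because $h_{\max}$ is bounded by $\diam(\Om)$. Together with $|u|_{H^2(\Om)}\le\rho$ and standard interpolation for $|u|_{H^1(\Om)}$ between $L^2(\Om)$ and $H^2(\Om)$, this yields $\|u\|_{H^2(\Om)}\le C(\de+\rho)$ for every $u\in S_{\de,\rho}(\Om)$.

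Next, I transfer the problem from $\Om$ to a cube. Since $\Om$ is a bounded Lipschitz domain, there exists a bounded extension operator $E:H^2(\Om)\to H^2(\R^d)$ supported in some fixed cube $Q_0\supset\bar\Om$ with $\|Eu\|_{H^2(Q_0)}\le C\|u\|_{H^2(\Om)}\le C(\de+\rho)$. Rescaling $Q_0$ to the unit cube $Q$ introduces only a constant factor in all Sobolev norms (depending on $\diam(\Om)$), so the image $\{Eu:u\in S_{\de,\rho}(\Om)\}$ lies in a ball of radius $C(\de+\rho)$ in $H^2(Q)=W^{2,2}(Q)$. Any $\vep$-net in $L^\infty(Q)$ for this image produces, by restriction to $\Om$, an $\vep$-net in $L^\infty(\Om)$ for $S_{\de,\rho}(\Om)$, since $\|u-v|_\Om\|_{L^\infty(\Om)}\le\|Eu-v\|_{L^\infty(Q)}$.

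Finally, I apply Lemma 4.3 with $\al=2$, $p=2$, $q=\infty$; the admissibility condition $\al p=4>d$ holds because $d\le 3$. This gives
\ben
\log N\!\left(\frac{\vep}{C(\de+\rho)},SW^{2,2}(Q),\|\cdot\|_{L^\infty(Q)}\right)\le C\left(\frac{\vep}{C(\de+\rho)}\right)^{-d/2}\le C\left(\frac{\de+\rho}{\vep}\right)^{d/2},
\een
and combining with the preceding reduction yields the claimed bound on $\log N(\vep,S_{\de,\rho}(\Om),\|\cdot\|_{L^\infty(\Om)})$.

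The routine parts are the extension, scaling, and the transfer of covers. The only mildly delicate step is the conversion $\|u\|_n\le\de\Rightarrow\|u\|_{L^2(\Om)}\le C(\de+\rho)$, where I must make sure the constants coming from \eqref{f2} depend only on $\Om$ and $B$ and not on $n$; this is fine because the factor $h_{\max}^2$ in \eqref{f2} is bounded by $\diam(\Om)^2$ uniformly in $n$.
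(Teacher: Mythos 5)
Your proposal is correct and follows essentially the same route as the paper: use \eqref{f2} (with $h_{\max}$ bounded uniformly in $n$) to upgrade $\|u\|_n\le\de$, $|u|_{H^2(\Om)}\le\rho$ to $\|u\|_{H^2(\Om)}\le C(\de+\rho)$, then invoke the Birman--Solomyak bound of Lemma \ref{lem:4.3} with $\al=2$, $p=2$, $q=\infty$. The extension-to-a-cube and rescaling step you spell out is exactly what the paper leaves implicit in ``the lemma now follows from Lemma \ref{lem:4.3},'' so your write-up is just a more detailed version of the same argument.
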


\begin{proof} By \eqref{f2} we have for any $u\in S_{\de,\rho}(\Om)$, $\|u\|_{H^2(\Om)}\le C(\|u\|_{L^2(\Om)}+|u|_{H^2(\Om)})
\le C(\|u\|_n+|u|_{H^2(\Om)})\le C(\de+\rho)$, where we have used the fact that $h_{\max}\le Cn^{-1/d}\le C$. The lemma now
follows from Lemma \ref{lem:4.3}.
\end{proof}

The following lemma is proved by the argument in \cite[Lemma 2.2.7]{Vaart}.
\begin{lemma}\label{lem:4.5}
$\{E_n(u):=(e,u)_n: u\in H^2(\Om)\}$ is a sub-Gaussian random process with respect to the semi-distance $d(u,v)=\|u-v\|_n^*$, where $\|u\|^*_n:=\sigma n^{-1/2}\|u\|_n$.
\end{lemma}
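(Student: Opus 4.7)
The plan is to verify the definition of a sub-Gaussian process directly. For arbitrary $u,v\in H^2(\Om)$ the increment is
$$E_n(u)-E_n(v)=(e,u-v)_n=\frac1n\sum_{i=1}^n e_i\big(u(x_i)-v(x_i)\big),$$
which is a linear combination of the independent mean-zero sub-Gaussian noises $e_i$ with deterministic coefficients $w_i:=\frac1n(u(x_i)-v(x_i))$. So the task is to show that this linear combination is itself sub-Gaussian with parameter $\|u-v\|_n^{*}$, and then deduce the tail bound exactly as in the derivation of \eqref{gg1} from \eqref{e1}.

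First I would compute the moment generating function. Using independence of the $e_i$ and the sub-Gaussian assumption \eqref{e1}, for every $\lam\in\R$,
$$\E\big[e^{\lam(E_n(u)-E_n(v))}\big]=\prod_{i=1}^n\E\big[e^{\lam w_i e_i}\big]\le\prod_{i=1}^n e^{\tfrac12\sigma^2\lam^2 w_i^2}=\exp\!\left(\frac{\lam^2\sigma^2}{2n^2}\sum_{i=1}^n(u(x_i)-v(x_i))^2\right).$$
The exponent equals $\tfrac12\lam^2\,\sigma^2 n^{-1}\|u-v\|_n^2=\tfrac12\lam^2 d(u,v)^2$, so the increment $E_n(u)-E_n(v)$ satisfies the sub-Gaussian bound \eqref{e1} with parameter $d(u,v)=\|u-v\|_n^{*}$.

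Second, I would repeat the Markov-inequality argument already displayed after \eqref{gg1}: for any $z>0$ and $\lam>0$,
$$\mathbb{P}(E_n(u)-E_n(v)\ge z)\le e^{-\lam z}\,\E\big[e^{\lam(E_n(u)-E_n(v))}\big]\le e^{-\lam z+\tfrac12 \lam^2 d(u,v)^2},$$
and optimizing by $\lam=z/d(u,v)^2$ gives $e^{-\tfrac12 z^2/d(u,v)^2}$. The same argument applied to $-(E_n(u)-E_n(v))$ handles the opposite tail, yielding
$$\mathbb{P}(|E_n(u)-E_n(v)|\ge z)\le 2e^{-\tfrac12 z^2/d(u,v)^2},\ \ \ \ \forall z>0,$$
which is exactly the defining inequality \eqref{e41}. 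Separability of the process in the semi-metric $d$ is immediate since $E_n$ depends on $u$ only through the finite-dimensional vector $(u(x_1),\dots,u(x_n))$.

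There is no real obstacle here: the only substantive ingredient is the tensorization of the moment generating functions afforded by the independence of the $e_i$, and $d$ is precisely the quantity produced by that computation. The non-trivial work in this section has been pushed into the design of the semi-distance $d=\|\cdot\|_n^{*}$, which is scaled by exactly $\sigma n^{-1/2}$ so that the sub-Gaussian constant in \eqref{e41} comes out to be $1$.
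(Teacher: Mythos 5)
Your proof is correct and follows essentially the same route as the paper: decompose the increment as a linear combination $\sum_i c_i e_i$ with $c_i=\frac1n(u-v)(x_i)$, bound the moment generating function by independence and \eqref{e1} to get sub-Gaussianity with parameter $d(u,v)=\sigma n^{-1/2}\|u-v\|_n$, and conclude the tail bound \eqref{e41} via the Markov/Chernoff argument already displayed for \eqref{gg1}. The only (harmless) difference is that you re-derive the tail bound and mention separability explicitly, whereas the paper simply invokes \eqref{gg1}.
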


\begin{proof} By definition $E_n(u)-E_n(v)=\sum^n_{i=1}c_ie_i$, where $c_i=\frac{1}{n}(u-v)(x_i)$. Since $e_i$ is a sub-Gaussion random variable with parameter $\sigma$ and $\E [e_i]=0$, by \eqref{e1}, $\E [e^{\lam e_i}]\le e^{\frac 12\sigma^2\lam^2}, \forall\lam>0$. Thus, since $e_i$, $i=1,2,\cdots,n$, are independent random variables,
\ben
\E\left[e^{\lam\sum^n_{i=1}c_ie_i}\right]\le e^{\frac 12\sigma^2\lam^2\sum^n_{i=1}c_i^2}=e^{\frac 12\sigma^2n^{-1}\lam^2\|u-v\|_n^2}=e^{\frac 12d(u,v)^2\lam^2}.
\een
This shows $E_n(u)-E_n(v)$ is a sub-Gaussion random variable with parameter $d(u,v)$. By \eqref{gg1} we have
\ben
\mathbb{P}(|E_n(u)-E_n(v)|\ge z)\le 2e^{-\frac 12z^2/d(u,v)^2},\ \ \forall z>0.
\een
This shows the lemma by the definition of sub-Gaussion random process \eqref{e41}.
\end{proof}

The following lemma which improves Lemma \ref{lem:4.1} will be used in our subsequent analysis.
\begin{lemma}\label{lem:4.6}
If $X$ is a random variable which satisfies
\ben
\mathbb{P}(|X|>\alpha (1+z))\le C_1e^{-z^2/K_1^2},\ \ \forall\alpha>0, z\ge 1 ,
\een
where $C_1, K_1$ are some positive constants, then $\|X\|_{\psi_2}\le C(C_1,K_1)\alpha$ for some constant $C(C_1,K_1)$ depending only on $C_1,K_1$.
\end{lemma}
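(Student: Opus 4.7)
The plan is to reduce Lemma 4.6 to the earlier Lemma 4.1 by rephrasing the hypothesis as a Gaussian-type tail bound that is valid for \emph{all} $w>0$ (not only for $w$ in the range originally supplied). Fix $\alpha>0$ as in the statement. First I would change variables: setting $w=\alpha(1+z)$, the assumption is equivalent to
\[
\mathbb{P}(|X|>w)\le C_1\exp\!\Bigl(-\tfrac{1}{K_1^2}(w/\alpha-1)^2\Bigr),\qquad\forall w\ge 2\alpha.
\]
The key elementary observation is that on the range $w\ge 2\alpha$ one has $w/\alpha-1\ge w/(2\alpha)$, so the exponent can be cleanly lower bounded by $w^2/(4K_1^2\alpha^2)$. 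This gives
\[
\mathbb{P}(|X|>w)\le C_1\exp\!\Bigl(-\tfrac{w^2}{4K_1^2\alpha^2}\Bigr),\qquad\forall w\ge 2\alpha.
\]

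Next, I would handle the complementary range $0<w<2\alpha$ by the trivial bound $\mathbb{P}(|X|>w)\le 1$ together with $\exp(-w^2/(4K_1^2\alpha^2))\ge e^{-1/K_1^2}$ on that range, which yields the same-shape estimate with constant $e^{1/K_1^2}$ in place of $C_1$. Setting $K:=\max(C_1,e^{1/K_1^2})$ (which depends only on $C_1,K_1$), both cases combine into the single bound
\[
\mathbb{P}(|X|>w)\le K\exp\!\Bigl(-\tfrac{w^2}{4K_1^2\alpha^2}\Bigr),\qquad\forall w>0.
\]
Applying Lemma \ref{lem:4.1} with the constants $K$ and $C=1/(4K_1^2\alpha^2)$ immediately gives $\|X\|_{\psi_2}\le\sqrt{(1+K)/C}=2K_1\alpha\sqrt{1+K}$, which is of the desired form $C(C_1,K_1)\,\alpha$.

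The main obstacle is really only the algebraic observation in the first step: recognizing that the shift $-1$ in $(w/\alpha-1)^2$ can be absorbed into a harmless factor $1/4$ on the range $w\ge 2\alpha$, so that a genuine Gaussian tail in $w/\alpha$ survives. Once this is in hand the extension to small $w$ is free, and Lemma \ref{lem:4.1} does the remaining work. The linear dependence of the conclusion on $\alpha$ comes directly from the scaling $w\mapsto w/\alpha$ in the exponent, so tracking this scale carefully through each step is what ensures the final constant factorizes as $C(C_1,K_1)\,\alpha$ rather than hiding any residual $\alpha$-dependence inside the constant.
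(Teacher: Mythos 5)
Your proof is correct and follows essentially the same route as the paper's: substitute $w=\alpha(1+z)$, absorb the shift in the exponent on the range $w\ge 2\alpha$ (you use $w/\alpha-1\ge w/(2\alpha)$ where the paper uses $(w/\alpha-1)^2\ge\tfrac12(w/\alpha)^2-1$, a purely cosmetic difference), cover $w<2\alpha$ by the trivial bound, and conclude via Lemma \ref{lem:4.1}. The resulting constant $2K_1\alpha\sqrt{1+K}$ correctly depends only on $C_1,K_1$ and scales linearly in $\alpha$, as required.
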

\begin{proof} If $y\ge 2\alpha$, then $z=(y/\alpha)-1\ge 1$. Thus
\ben
\mathbb{P}(|X|>y)=\mathbb{P}(|X|>\alpha (1+z))\le C_1\exp \left[-\frac 1{K_1^2}\left(\frac y\alpha-1\right)^2\right].
\een
Since $(\frac y\alpha -1)^2\ge \frac 12(\frac y\alpha)^2-1$ by Cauchy-Schwarz inequality, we obtain
\ben
\mathbb{P}(|X|>y)\le C_1e^{\frac 1{K_1^2}}e^{-\frac{y^2}{2K_1^2\alpha^2}}=C_1e^{\frac 1{K_1^2}}e^{-\frac{y^2}{K^2_2}},
\een
where $K_2:=\sqrt 2\alpha K_1$. On the other hand, if $y<2\alpha$, then
\ben
\mathbb{P}(|X|>y)\le e^{\frac{y^2}{K_2^2}}e^{-\frac{y^2}{K_2^2}}\le e^{\frac{2}{K^2_1}}e^{-\frac{y^2}{K_2^2}}.
\een
Therefore, $\mathbb{P}(|X|>y)\le C_2e^{-y^2/K_2^2}$, $\forall y>0$, where $C_2=\max(C_1e^{1/{K_1^2}},e^{2/K_1^2})$. This implies by Lemma \ref{lem:4.1},
\ben
\|X\|_{\psi_2}\le\sqrt{1+C_2}K_2=C(C_1,K_1)\alpha,\ \ \mbox{where }C(C_1,K_1)=\sqrt 2K_1\sqrt{1+C_2}.
\een
This completes the proof.
\end{proof}

\begin{theorem}\label{thm:4.1}
Let $u_n\in H^2(\Om)$ be the solution of \eqref{b1}. Denote by $\rho_0=|u_0|_{H^2(\Omega)}+\sigma n^{-1/2}$. If we take
\beq\label{e4}
\lambda_n^{1/2+d/8}= O(\sigma n^{-1/2}\rho_0^{-1}),
\eeq
then there exists a constant $C>0$ such that
\beq\label{e5}
\|\,\|u_n-u_0\|_n\,\|_{\psi_2} \leq C\lambda_n^{1/2}\rho_0,\ \ \ \ \|\,|u_n|_{H^2(\Om)}\,\|_{\psi_2}\le C\rho_0.
\eeq
\end{theorem}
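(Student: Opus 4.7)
The plan is to establish a sub-Gaussian tail bound of the form $\mathbb{P}(\|u_n-u_0\|_n > C\lam_n^{1/2}\rho_0(1+z))\le C_1 e^{-z^2/K_1^2}$ for $z\ge 1$ (and its analogue for $|u_n|_{H^2(\Om)}$), then to apply Lemma \ref{lem:4.6} to read off the Orlicz $\psi_2$ bounds in \eqref{e5}.

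The starting point is a basic energy inequality. Setting $v=u_n-u_0\in H^2(\Om)$ in \eqref{b1} and applying Young's inequality to the cross term $\lam_n a_\Om(u_0,u_n-u_0)$ gives
$$\frac{\lam_n}{2}|u_n-u_0|_{H^2(\Om)}^2+\|u_n-u_0\|_n^2\le \frac{\lam_n}{2}|u_0|_{H^2(\Om)}^2+(e,u_n-u_0)_n.$$
Since $u_n-u_0$ is itself random, the stochastic term must be controlled by a \emph{uniform} empirical process estimate on the slab $S_{\de,\rho}(\Om)$ defined in \eqref{f3}. By Lemma \ref{lem:4.5} the process $E_n(v):=(e,v)_n$ is sub-Gaussian with respect to $d(u,v)=\si n^{-1/2}\|u-v\|_n\le\si n^{-1/2}\|u-v\|_{L^\infty(\Om)}$; combining this with the entropy bound of Lemma \ref{lem:4.4} and plugging into the maximal inequality of Lemma \ref{lem:4.2} (the integrand behaves like $\vep^{-d/4}$, integrable since $d\le 3$) yields
$$\bigl\|\sup_{v\in S_{\de,\rho}(\Om)}|E_n(v)|\bigr\|_{\psi_2}\le C\si n^{-1/2}(\rho+\de)^{d/4}\de^{1-d/4}.$$

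To replace $S_{\de,\rho}(\Om)$ by a set containing the random $u_n-u_0$, I perform dyadic peeling: decompose $v\in H^2(\Om)$ according to the slabs $2^{j-1}\al\le\|v\|_n\le 2^j\al$ and $2^{k-1}\beta\le|v|_{H^2(\Om)}\le 2^k\beta$ with $\al=\lam_n^{1/2}\rho_0$ and $\beta=\rho_0$, and apply the $\psi_2$ bound above on each with tail threshold scaling like $2^{(j+k)/2}(1+z)$. A union bound over $(j,k)$ then produces, with probability $\ge 1-C_1 e^{-z^2/K_1^2}$,
$$|(e,u_n-u_0)_n|\le C(1+z)\si n^{-1/2}\bigl(|u_n-u_0|_{H^2(\Om)}+\|u_n-u_0\|_n\bigr)^{d/4}\|u_n-u_0\|_n^{1-d/4}.$$
Substituting into the basic inequality and splitting the cross term by a three-factor weighted Young inequality with exponents $(2,\,8/d,\,8/(4-d))$ absorbs fractions of $\|u_n-u_0\|_n^2$ and $\lam_n|u_n-u_0|_{H^2(\Om)}^2$ back into the left-hand side, leaving
$$\|u_n-u_0\|_n^2+\lam_n|u_n-u_0|_{H^2(\Om)}^2\lc\lam_n|u_0|_{H^2(\Om)}^2+(1+z)^2\si^2 n^{-1}\lam_n^{-d/4}.$$
Under the calibration \eqref{e4} both right-hand terms are $O((1+z)^2\lam_n\rho_0^2)$, delivering the desired sub-Gaussian tail for $\|u_n-u_0\|_n$ and, via the triangle inequality $|u_n|_{H^2(\Om)}\le|u_0|_{H^2(\Om)}+|u_n-u_0|_{H^2(\Om)}$, for $|u_n|_{H^2(\Om)}$. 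Lemma \ref{lem:4.6} converts both tails into the Orlicz estimates \eqref{e5}.

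I expect the chief obstacle to be calibrating the peeling so that the union bound over the dyadic blocks $(j,k)$ still sums to a quantity of the form $C_1 e^{-z^2/K_1^2}$: the per-slab threshold must grow in $j+k$ fast enough that the Gaussian decay absorbs the doubly-indexed sum, yet slowly enough that the resulting bound for $|(e,u_n-u_0)_n|$ remains proportional to $(|u_n-u_0|_{H^2(\Om)}+\|u_n-u_0\|_n)^{d/4}\|u_n-u_0\|_n^{1-d/4}$ so that the Young-inequality absorption goes through with constants strictly less than one.
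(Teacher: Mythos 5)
Your plan follows the same skeleton as the paper (basic inequality, the entropy bound of Lemma \ref{lem:4.4} fed into the maximal inequality of Lemma \ref{lem:4.2}, dyadic peeling with $\al=\lam_n^{1/2}\rho_0$, $\beta=\rho_0$, and Lemma \ref{lem:4.6} at the end), but the step you yourself flag as ``the chief obstacle'' is a genuine gap, not just a calibration detail. If on the slab indexed by $(j,k)$ you take the threshold to be $2^{(j+k)/2}(1+z)$ times the $\psi_2$-norm of the slab supremum --- which is what makes the union bound over the doubly infinite family sum to $C_1e^{-z^2/K_1^2}$ --- then the inequality you actually obtain on that slab carries the factor $2^{(j+k)/2}\asymp(\|v\|_n/\al)^{1/2}(|v|_{H^2(\Om)}/\beta)^{1/2}$, which cannot be dropped; the clean bound you display, $|(e,u_n-u_0)_n|\le C(1+z)\si n^{-1/2}(|u_n-u_0|_{H^2(\Om)}+\|u_n-u_0\|_n)^{d/4}\|u_n-u_0\|_n^{1-d/4}$, therefore does not follow. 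Conversely, if you drop the $2^{(j+k)/2}$ growth so as to keep the clean form, every slab fails with probability comparable to $e^{-c(1+z)^2}$ and the sum over all $(j,k)$ diverges. Keeping the extra factor also ruins the absorption: after using \eqref{e4} the stochastic term then contains pieces such as $C(1+z)\lam_n^{d/8}\,\|v\|_n^{3/2}\bigl(\lam_n^{1/2}|v|_{H^2(\Om)}\bigr)^{1/2}$, of the same homogeneity degree $2$ as the left-hand side $\|v\|_n^2+\lam_n|v|_{H^2(\Om)}^2$, so Young's inequality only rescales it and absorption needs $(1+z)\lam_n^{d/8}$ small --- exactly what fails in the large-$z$ regime you need for a sub-Gaussian tail. (With the clean bound the three-factor Young step would indeed give $O((1+z)^2\si^2n^{-1}\lam_n^{-d/4})=O((1+z)^2\lam_n\rho_0^2)$ as you say; the problem is solely that the clean bound is unjustified.)

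The paper resolves this by never proving a uniform weighted bound: it estimates $\mathbb{P}(u_n-u_0\in F_{ij})$ slab by slab, using the basic inequality \emph{inside} the slab, so the effective threshold is $2^{2(i-1)}\de^2+\lam_n 2^{2(j-1)}\rho^2-\lam_n\rho_0^2$, which grows quadratically in $2^i,2^j$, while the $\psi_2$-modulus of the slab supremum (its estimate \eqref{g5}) grows only linearly; with $\de=\lam_n^{1/2}\rho_0(1+z)$, $\rho=\rho_0$ the per-slab probability is $\exp[-C(2^{2i}z^2+2^{2j})]$, which sums, and Lemma \ref{lem:4.6} finishes. To repair your argument, replace the ``uniform weighted bound plus deterministic absorption'' step by this slab-by-slab probability estimate (or by a weighted bound whose weight grows quadratically relative to the modulus). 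Your derivation of the basic inequality via \eqref{b1}, the entropy and maximal-inequality computation, and the triangle-inequality deduction of the second bound in \eqref{e5} are all fine once the tail bound for $\|u_n-u_0\|_n$ and $\lam_n^{1/2}|u_n-u_0|_{H^2(\Om)}$ is in place.
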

\begin{proof} We will only prove the first estimate in \eqref{e5} by the peeling argument. The other estimate can be proved in a similar way.
It follows from \eqref{p7} that
\beq\label{g1}
\|u_n-u_0\|^2_n + \lambda_n |u_n|_{H^2(\Omega)}^2 \leq 2(e,u_n-u_0)_n + \lambda_n |u_0|_{H^2(\Omega)}^2.
\eeq
Let $\delta >0,\ \rho>0$ be two constants to be determined later, and
\beq\label{g2}
A_0=[0,\delta), A_i=[2^{i-1}\delta,2 ^i\delta), \ \ B_0=[0,\rho), B_j=[2^{j-1}\rho,2^j\rho),\ \ \ \ i,j\ge 1.
\eeq
For $i,j\ge 0$, define
\ben
F_{ij}= \{v \in H^2(\Omega):~  \|v\|_n \in A_i ~,~ |v|_{H^2(\Omega)} \in B_j \}.
\een
Then we have
\beq\label{g3}
\mathbb{P}(\|u_n-u_0\|_n>\de)\le\sum_{i=1}^\infty\sum_{j=0}^\infty \mathbb{P}(u_n-u_0\in F_{ij}).
\eeq
Now we estimate $\mathbb{P}(u_n-u_0\in F_{ij})$. By Lemma \ref{lem:4.5}, $\{(e,v)_n:v\in H^2(\Om)\}$ is a sub-Gaussion random process with respect to the semi-distance $d(u,v)=\sigma n^{-1/2}\|u-v\|_n$. It is
easy to see that
\ben
\diam\, F_{ij}\le \sigma n^{-1/2}\sup_{u-u_0,v-u_0\in F_{ij}}(\|u-u_0\|_n+\|v-u_0\|_n)\le 2\sigma n^{-1/2}\cdot 2^i\de.
\een
Then by \eqref{f1} and the maximal inequality in Lemma \ref{lem:4.2} we have
\ben
\|\sup_{u-u_0\in F_{ij}}|(e,u-u_0)_n|\|_{\psi_2}&\le& K\int^{\sigma n^{-1/2}\cdot 2^{i+1}\de}_0\sqrt{\log N\left(\frac\vep 2,F_{ij}, d\right)}\,d\vep\nn\\
&=&K\int^{\sigma n^{-1/2}\cdot 2^{i+1}\de}_0\sqrt{\log N\left(\frac\vep{2\sigma n^{-1/2}},F_{ij}, \|\cdot\|_n\right)}\,d\vep.
\een
By Lemma
\ref{lem:4.4} we have the estimate for the entropy
\ben
\log N\left(\frac\vep{2\sigma n^{-1/2}},F_{ij}, \|\cdot\|_n\right)&\le&\log N(\frac\vep{2\sigma n^{-1/2}},F_{ij}, \|\cdot\|_{L^\infty(\Om)})\\
&\le&C\left(\frac{2\sigma n^{-1/2}\cdot(2^i\de+2^j\rho)}{\vep}\right)^{d/2}.
\een
Therefore,
\be
\|\sup_{u-u_0\in F_{ij}}|(e,u-u_0)_n\|_{\psi_2}&\le&K\int^{\sigma n^{-1/2}\cdot 2^{i+1}\de}_0\left(\frac{2\sigma n^{-1/2}\cdot(2^i\de+2^j\rho)}{\vep}\right)^{d/4}\,d\vep\nn\\
&=&C\sigma n^{-1/2}(2^i\de+2^j\rho)^{d/4}(2^i\de)^{1-d/4}\nn\\
&\le&C\sigma n^{-1/2}[2^i\de+(2^i\de)^{1-d/4}(2^j\rho)^{d/4}].\label{g5}
\ee
By \eqref{g1} and \eqref{e3} we have for $i,j \geq 1$:
\ben
\mathbb{P}(u_n-u_0\in F_{ij})& \leq & \mathbb{P}(2^{2(i-1)}\delta^2 + \lambda_n 2^{2(j-1)}\rho^2 \leq 2 \mathop {\sup}\limits_{u-u_0 \in F_{ij}}|(e,u-u_0)_n|
+ \lambda_n \rho^2_0 ) \\
& =& \mathbb{P}(2 \mathop {\sup}\limits_{u-u_0 \in F_{ij}}|(e,u-u_0)_n|\ge 2^{2(i-1)}\delta^2 + \lambda_n 2^{2(j-1)}\rho^2 - \lambda_n \rho^2_0)\\
&\le &2\exp \left[- \frac{1}{C\sigma^2 n^{-1}} \left(\frac{2^{2(i-1)}\delta^2 + \lambda_n 2^{2(j-1)}\rho^2-\lambda_n \rho^2_0}{2^i\delta + (2^i\delta)^{1-d/4} (2^j\rho)^{d/4}} \right)^2\right].
\een
Now we take
\beq\label{g6}
\delta^2=\lambda_n\rho_0^2 (1+z)^2,\ \rho=\rho_0, \ \ \mbox{where }z \ge 1.
\eeq
Since by assumption $\lambda_n^{1/2+d/8}=O(\sigma n^{-1/2}\rho_0^{-1})$ and $\sigma n^{-1/2}\rho_0^{-1}\leq 1$, we have
$\lam_n\le C$ for some constant. By some simple calculation we have for $i,j \geq 1$,
\ben
\mathbb{P}(u_n-u_0\in F_{ij})& \le & 2\exp \left[ - C \left(\frac{2^{2(i-1)}z(1+z) + 2^{2(j-1)}}{ 2^i(1+z) + (2^i (1+z))^{1-d/4} (2^j)^{d/4}} \right)^2\right].
\een
By using the elementary inequality $ab\le \frac 1p a^p+\frac 1q b^q$ for any $a,b>0, p,q>1, p^{-1}+q^{-1}=1$, we have
$(2^i (1+z))^{1-d/4} (2^j)^{d/4}\le (1+z)2^i+2^j$. Thus
\ben
\mathbb{P}(u_n-u_0\in F_{ij})\le 2\exp \left[ - C (2^{2i} z^2 + 2^{2j}) \right].
\een
Similarly, one can prove for $i\geq 1, j=0$,
\ben
\mathbb{P}(u_n-u_0\in F_{i0}) \le 2\exp \left[- C (2^{2i} z^2) \right].
\een
Therefore, since $\sum^\infty_{j=1}e^{-C(2^{2j})}\le e^{-C}< 1$ and $\sum^\infty_{i=1}e^{-C(2^{2i}z^2)}\le e^{-Cz^2}$, we obtain finally
\ben
\sum_{i=1}^\infty\sum_{j=0}^\infty \mathbb{P}(u_n-u_0\in F_{ij})&\le&2\sum_{i=1}^\infty\sum_{j=1}^\infty e^{- C (2^{2i} z^2 + 2^{2j})}+2\sum^\infty_{i=1}e^{- C (2^{2i} z^2)}\\
&\le&4e^{-Cz^2}.
\een
Now inserting the estimate to \eqref{g3} we have
\beq\label{g7}
\mathbb{P}(\|u_n-u_0\|_n>\lam_n^{1/2}\rho_0(1+z))\le 4e^{-Cz^2}, \ \ \ \ \forall z\ge 1.
\eeq
This implies by using Lemma \ref{lem:4.6} that $\|\|u_n-u_0\|_n\|_{\psi_2} \leq C \lambda_n^{1/2}\rho_0$.
This completes the proof.
\end{proof}

We remark that \eqref{g7} implies that
\ben
\lim_{z\to\infty}\overline{\lim_{n\to\infty}}\ \mathbb{P}(\|u_n-u_0\|_n>\lam_n^{1/2}\rho_0(1+z))=0.
\een
In terms of the terminology of the stochastic convergence order, we have $\|u_n-u_0\|_n=O_p(\lam_n^{1/2})\rho_0$ which by the
assumption \eqref{e4} yields
\ben
\|u_n-u_0\|_n=O_p(n^{-\frac 2{4+d}})\sigma^{\frac 4{4+d}}\rho_0^{-\frac 4{4+d}}.
\een
This estimate is proved in \cite[Section 10.1.1]{Geer} when $d=1$. Our result in Theorem \ref{thm:4.1} is stronger in the sense that it also provides the tail property of the probability distribution function of the random error $\|u_n-u_0\|_n$.

\subsection{Stochastic convergence of the finite element method}
The following lemma provides the estimate of the entropy of finite dimension subsets \cite[Corollary 2.6]{Geer}.
\begin{lemma}\label{lem:4.7}
Let $G$ be a finite dimensional subspace of $L^2(\Om)$ of dimension $N>0$ and $G_R=\{f\in G: \|f\|_{L^2(\Om)}\le R\}$. Then
\ben
N(\vep,G_R,\|\cdot\|_{L^2(\Om)})\le (1+4R/\vep)^N,\ \ \forall \vep>0.
\een
\end{lemma}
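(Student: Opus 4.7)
The plan is to establish the bound by a standard volume comparison argument, after identifying the finite-dimensional space $G$ with $\R^N$. First I would fix any basis $\{g_1,\ldots,g_N\}$ of $G$; by equivalence of norms on a finite-dimensional space, the map $\R^N\to G$ given by $c\mapsto\sum_i c_i g_i$ is a linear homeomorphism, so balls in $G$ correspond to centrally symmetric convex bodies in $\R^N$. The crucial point for the argument is not the particular shape of these bodies but only that $G_R$ equals $R\cdot G_1$, i.e. the set $G_R$ scales linearly, and that Lebesgue measure on $\R^N$ is $N$-homogeneous. Hence for any radius $r>0$ the volume of a $\|\cdot\|_{L^2(\Om)}$-ball of radius $r$ in $G$ (viewed via the identification) is $r^N$ times the volume of the unit ball.

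Next I would bound the covering number by a packing number via the standard trick: let $\{f_1,\ldots,f_M\}\subset G_R$ be a maximal $\vep$-separated subset of $G_R$ under $\|\cdot\|_{L^2(\Om)}$. Maximality implies that every element of $G_R$ lies within distance $\vep$ of some $f_j$, so $N(\vep,G_R,\|\cdot\|_{L^2(\Om)})\le M$. Now I would apply a volume comparison: the open balls $B(f_j,\vep/2)$ in $G$ are pairwise disjoint by the separation property, and all are contained in the enlarged ball of radius $R+\vep/2$ centered at the origin, since each $f_j$ has $\|f_j\|_{L^2(\Om)}\le R$. Passing to Lebesgue measure on $\R^N$ via the fixed basis identification, the disjointness and inclusion give
\begin{equation*}
M\cdot(\vep/2)^N\cdot\mathrm{vol}(B_1)\;\le\;(R+\vep/2)^N\cdot\mathrm{vol}(B_1),
\end{equation*}
where $B_1$ denotes the unit ball of $G_1$ under the identification. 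Dividing through yields $M\le(1+2R/\vep)^N$, which is even sharper than the stated constant $(1+4R/\vep)^N$; the stated bound then follows a fortiori.

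There is essentially no obstacle here since the statement is a classical volume-packing inequality; the only care needed is to make sure the comparison is done in the same (finite-dimensional) Lebesgue measure on both sides, which is automatic once a basis is fixed. I would conclude by noting that the argument is dimension-free in $\Om$ and uses only the linear structure of $G$ together with the scaling property of the $L^2$-norm, so nothing specific about $L^2(\Om)$ is needed beyond its being a normed space restricted to $G$.
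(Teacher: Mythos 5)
Your proposal is correct. Note that the paper does not prove this lemma at all: it is simply quoted from van de Geer \cite[Corollary 2.6]{Geer}, so there is no in-paper argument to compare against. Your maximal $\varepsilon$-separated set plus volume-comparison argument is exactly the classical proof of that cited result: the strict $\varepsilon$-separation makes the balls $B(f_j,\varepsilon/2)$ disjoint, they all sit inside the ball of radius $R+\varepsilon/2$, and translation invariance together with $N$-homogeneity of Lebesgue measure under the basis identification gives $M(\varepsilon/2)^N\le (R+\varepsilon/2)^N$, hence $N(\varepsilon,G_R,\|\cdot\|_{L^2(\Omega)})\le M\le (1+2R/\varepsilon)^N$, which is sharper than the stated $(1+4R/\varepsilon)^N$ and so implies it. The steps you flag implicitly (equivalence of norms to identify $G$ with $\R^N$, covering number bounded by the maximal packing, and the fact that only the scaling and translation invariance of the induced norm ball are used) are all handled correctly, so the argument is complete as written.
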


\begin{lemma}\label{lem:4.8}
Let $G_h:=\{v_h\in V_h: \lj v_h\rj_h=(\lam_n|v_h|_{2,h}^2+\|\hat v_h\|_n^2)^{1/2}\le 1\}$. Assume that $h=O(\lam_n^{1/4})$ and $n\lam_n^{d/4}\ge 1$. Then
\ben
\|\,\sup_{v_h\in G_h}|(e,\hat v_h-\Pi_h v_h)_n|\,\|_{\psi_2}\le C\sigma n^{-1/2}\lam_n^{-d/8}.
\een
\end{lemma}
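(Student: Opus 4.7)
The natural approach is to apply the maximal inequality of Lemma~\ref{lem:4.2} to the random process $X_{v_h}:=(e,\hat v_h-\Pi_h v_h)_n$ indexed by $v_h\in G_h$. The moment generating function computation of Lemma~\ref{lem:4.5}, applied to the coefficient vector $\frac 1n[(\hat u_h-\Pi_h u_h)-(\hat v_h-\Pi_h v_h)](x_i)$, shows that $\{X_{v_h}\}_{v_h\in G_h}$ is a sub-Gaussian process with respect to the semi-metric
\[
d(u_h,v_h):=\sigma n^{-1/2}\|(\hat u_h-\Pi_h u_h)-(\hat v_h-\Pi_h v_h)\|_n.
\]
Since $0\in G_h$ and $X_0=0$, combining Lemma~\ref{lem:4.2} with \eqref{f1} reduces the desired $\psi_2$-bound to controlling the entropy integral $\int_0^{\diam(G_h,d)}\sqrt{\log N(\vep/2,G_h,d)}\,d\vep$.

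\textbf{Diameter and entropy.} For any $v_h\in G_h$, the constraint $\lj v_h\rj_h\le 1$ forces $|v_h|_{2,h}\le\lam_n^{-1/2}$, so by \eqref{b5} one has $\|\hat v_h-\Pi_h v_h\|_n\le Ch^2\lam_n^{-1/2}=:R$, and hence $\diam(G_h,d)\le 2R\sigma n^{-1/2}$. For the entropy, the linear map $v_h\mapsto ((\hat v_h-\Pi_h v_h)(x_i))_{i=1}^n$ embeds $G_h$ into a subspace of $\R^n$ of dimension at most $\dim V_h\le Ch^{-d}$ (by quasi-uniformity of $\cM_h$), with the image lying in the $\|\cdot\|_n$-ball of radius $R$. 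The proof of Lemma~\ref{lem:4.7} is a purely geometric volume/packing argument that carries over verbatim with $\|\cdot\|_{L^2(\Om)}$ replaced by any norm on a finite-dimensional space, giving
\[
\log N(\vep,G_h,d)\le Ch^{-d}\log\bigl(1+4R\sigma n^{-1/2}/\vep\bigr).
\]

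\textbf{Completing the estimate.} Substituting into the entropy integral and rescaling $\vep=R\sigma n^{-1/2}s$ yields
\[
\int_0^{2R\sigma n^{-1/2}}\!\!\sqrt{\log N(\vep/2,G_h,d)}\,d\vep\le C\sigma n^{-1/2}h^{-d/2}R\int_0^2\sqrt{\log(1+8/s)}\,ds\le C\sigma n^{-1/2}h^{2-d/2}\lam_n^{-1/2},
\]
the final integral being a finite absolute constant. The hypothesis $h\le C\lam_n^{1/4}$ together with $2-d/2>0$ (valid since $d\le 3$) then gives $h^{2-d/2}\lam_n^{-1/2}\le C\lam_n^{(2-d/2)/4-1/2}=C\lam_n^{-d/8}$, which is the claim. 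The only non-routine point is the transfer of the packing estimate of Lemma~\ref{lem:4.7} from $L^2(\Om)$ to the empirical seminorm on the finite-dimensional image in $\R^n$; this is essentially free because volume-packing bounds in a fixed finite-dimensional normed space are norm-invariant up to constants, and it is quasi-uniformity of $\cM_h$ that furnishes the decisive factor $\dim V_h\le Ch^{-d}$ driving the exponent $2-d/2$ in the final bound.
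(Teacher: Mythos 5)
Your proof is correct, and its skeleton coincides with the paper's: establish sub-Gaussianity of $v_h\mapsto (e,\hat v_h-\Pi_h v_h)_n$ with respect to $\hat d(v_h,w_h)=\sigma n^{-1/2}\|(\hat v_h-\Pi_h v_h)-(\hat w_h-\Pi_h w_h)\|_n$ as in Lemma \ref{lem:4.5}, bound the diameter by Lemma \ref{lem:3.2}, and feed a finite-dimensional entropy bound into the maximal inequality of Lemma \ref{lem:4.2}. Where you genuinely diverge is the entropy step. The paper first dominates $\hat d(v_h,w_h)$ by $C\sigma n^{-1/2}\|v_h-w_h\|_{L^2(\Om)}$ using \eqref{b5} plus an inverse estimate, then verifies $\|v_h\|_{L^2(\Om)}\le C$ on $G_h$ via \eqref{f2} and the bound on $\|\Pi_h v_h\|_{L^2(\Om)}$ (this is exactly where the hypothesis $n\lam_n^{d/4}\ge 1$ is used), and only then applies Lemma \ref{lem:4.7} literally in $L^2(\Om)$ with dimension $Ch^{-d}$. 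You instead push $G_h$ forward under the linear map $v_h\mapsto((\hat v_h-\Pi_h v_h)(x_i))_{i=1}^n$ and apply the volume/packing bound directly in the image, a subspace of $(\R^n,\|\cdot\|_n)$ of dimension at most $\dim V_h\le Ch^{-d}$; since that bound is norm-independent in finite dimensions, this is legitimate, and it buys you something: you never need \eqref{f2}, the inverse estimate, or the condition $n\lam_n^{d/4}\ge 1$, and by carrying the radius $R=Ch^2\lam_n^{-1/2}$ explicitly you obtain the slightly sharper intermediate bound $C\sigma n^{-1/2}h^{2-d/2}\lam_n^{-1/2}$ using only the one-sided condition $h\le C\lam_n^{1/4}$, whereas the paper's final reduction from $Ch^{-d/2}\sigma n^{-1/2}$ to $C\sigma n^{-1/2}\lam_n^{-d/8}$ implicitly also uses $h\gtrsim\lam_n^{1/4}$. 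Your remark that $0\in G_h$ with $X_0=0$, needed to pass from increments $\sup_{s,t}|X_s-X_t|$ to $\sup_t|X_t|$, is a small point the paper glosses over and is handled correctly. The only caveat is that you invoke an extension of Lemma \ref{lem:4.7} (to an arbitrary norm on a finite-dimensional space) rather than the lemma as stated, but as you note the volume argument is norm-invariant, so this is harmless.
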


\begin{proof} Similar to the proof of Lemma \ref{lem:4.5} we know that $\{\hat E_n(v_h):=(e,\hat v_h-\Pi_h v_h)_n,\ \forall v_h\in G_h\}$ is a sub-Gaussion random process with respect to the semi-distance $\hat d(v_h,w_h)=\sigma n^{-1/2}\|(\hat v_h-\Pi_h v_h)-(\hat w_h-\Pi_h w_h)\|_n$. By Lemma \ref{lem:3.2}, for any $v_h\in G_h$, $\|\hat v_h-\Pi_hv_h\|_n\le Ch^2|v_h|_{2,h}\le Ch^2\lam_n^{-1/2}\le C$, where we have used the assumption $h=O(\lam_n^{1/4})$ in the last inequality. This implies that the
diameter of $G_h$ is bounded by $C\sigma n^{-1/2}$.
Now by the maximal inequality in Lemma \ref{lem:4.2}
\beq\label{i1}
\|\,\sup_{v_h\in G_h}|(e,\hat v_h-\Pi_h v_h)_n|\,\|_{\psi_2}\le K\int_0^{C\sigma n^{-1/2}}\sqrt{\log N\left(\frac\vep 2,G_h,\hat d\right)}\ d\vep.
\eeq
For any $v_h\in V_h$, by Lemma \ref{lem:3.2}, $\Pi_h v_h\in H^2(\Om)$ and thus by \eqref{f2}
\ben
\|\Pi_hv_h\|_{L^2(\Om)}&\le&C(h^2_{\max} |\Pi_hv_h|_{H^2(\Om)}+\|\Pi_hv_h\|_n)\\
&\le&C(n^{-2/d}\lam_n^{-1/2}+\|\Pi_hv_h-\hat v_h\|_n+\|\hat v_h\|_n)\\
&\le&C(n^{-2/d}\lam_n^{-1/2}+Ch^2\lam_n^{-1/2}+1)\\
&\le&C,
\een
where we have used $h=O(\lam_n^{1/4})$ and $n\lam_n^{d/4}\ge 1$ in the last inequality. Thus
\beq\label{i2}
\|v_h\|_{L^2(\Om)}\le \|v_h-\Pi_h v_h\|_{L^2(\Om)}+\|\Pi_h v_h\|_{L^2(\Om)}\le Ch^2|v_h|_{2,h}+C\le C,\ \ \ \ \forall v_h\in G_h.
\eeq
Moreover, by Lemma \ref{lem:3.2} and inverse estimate,
\beq\label{i3}
\hat d(v_h,w_h)\le C\sigma n^{-1/2}h^2|v_h-w_h|_{2,h}\le C\sigma n^{-1/2}\|v_h-w_h\|_{L^2(\Om)},\ \ \ \ \forall v_h,w_h\in V_h.
\eeq
Now since the dimension of $V_h$ is bounded by $Ch^{-d}$, Lemma \ref{lem:4.7} together with \eqref{i2}-\eqref{i3} implies
\ben
\log N\left(\frac\vep 2,G_h,\hat d\right)&=&\log N\left(\frac\vep{C\sigma n^{-1/2}},G_h,\|\cdot\|_{L^2(\Om)}\right)\\
&\le&Ch^{-d}(1+{\sigma n^{-1/2}}/\vep).
\een
Inserting this estimate to \eqref{i1}
\ben
\|\,\sup_{v_h\in G_h}|(e,\hat v_h-\Pi_h v_h)_n|\,\|_{\psi_2}&\le&C\int_0^{C\sigma n^{-1/2}}\sqrt{Ch^{-d}(1+\sigma n^{-1/2}/\vep)}\,
d\vep\\
&\le&Ch^{-d/2}\sigma n^{-1/2}.
\een
This completes the proof since $h=O(\lam_n^{1/4})$.
\end{proof}

The following theorem is the main result of this section.
\begin{theorem}\label{thm:4.2}
Let $u_h\in V_h$ be the solution of \eqref{d1}. Denote by $\rho_0=|u_0|_{H^2(\Omega)}+\sigma n^{-1/2}$. If we take
\beq\label{h1}
h=O(\lam_n^{1/4})\ \ \mbox{and }\ \lambda_n^{1/2+d/8}= O(\sigma n^{-1/2}\rho_0^{-1}),
\eeq
then there exists a constant $C>0$ such that
\beq\label{h2}
\|\,\|\hat u_h-u_0\|_n\,\|_{\psi_2} \leq C\lambda_n^{1/2}\rho_0,\ \ \ \ \|\,|u_h|_{H^2(\Om)}\,\|_{\psi_2}\le C\rho_0.
\eeq
\end{theorem}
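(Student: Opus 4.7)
The plan is to transplant the peeling argument of Theorem~\ref{thm:4.1} to the finite element setting, with the noise functional split into a conforming piece handled by the $H^2(\Om)$ entropy of Lemma~\ref{lem:4.4} and a nonconforming piece handled by Lemma~\ref{lem:4.8}.

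First I would derive a discrete analog of the starting inequality in Theorem~\ref{thm:4.1}. Since $u_h$ minimizes $\lam_n|\cdot|_{2,h}^2+\|\widehat{\cdot}-y\|_n^2$ over $V_h$, comparing with $I_hu_0$, using $y_i=u_0(x_i)+e_i$, and invoking Lemma~\ref{lem:3.1} together with $h=O(\lam_n^{1/4})$ yields
\ben
\lam_n|u_h|_{2,h}^2+\|\hat u_h-u_0\|_n^2\le C\lam_n|u_0|_{H^2(\Om)}^2+2(e,\hat{\tilde u}_h)_n,\quad \tilde u_h:=u_h-I_hu_0\in V_h.
\een
Setting $\de=\lam_n^{1/2}\rho_0(1+z)$, $\rho=\rho_0$ with $z\ge 1$, and classes $F_{ij}=\{\|\hat u_h-u_0\|_n\in A_i,\ |u_h|_{2,h}\in B_j\}$, on $F_{ij}$ the triangle inequality and Lemma~\ref{lem:3.1} force $\tilde u_h$ into the finite-dimensional ball $\tilde F_{ij}^h:=\{v_h\in V_h:\|\hat v_h\|_n\le C\cdot 2^i\de,\ |v_h|_{2,h}\le C\cdot 2^j\rho\}$.

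The crux is to bound $\|\sup_{v_h\in\tilde F_{ij}^h}|(e,\hat v_h)_n|\|_{\psi_2}$ via the splitting $(e,\hat v_h)_n=(e,\hat v_h-\Pi_hv_h)_n+(e,\Pi_hv_h)_n$. Lemma~\ref{lem:4.8} together with linearity and $\lj v_h\rj_h\le\lam_n^{1/2}2^j\rho+2^i\de$ delivers, for the nonconforming piece, the bound $C\sigma n^{-1/2}\lam_n^{-d/8}(\lam_n^{1/2}2^j\rho+2^i\de)$; the calibration \eqref{h1} forces $\sigma n^{-1/2}\lam_n^{-d/8}\le C\lam_n^{1/2}\rho_0$, so this collapses to $C\lam_n\rho_0^2(2^j+2^i(1+z))$. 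For the conforming piece, Lemma~\ref{lem:3.2} ensures $\Pi_hv_h\in H^2(\Om)$ with $|\Pi_hv_h|_{H^2(\Om)}\le C\cdot 2^j\rho$ and $\|\Pi_hv_h\|_n\le 2^i\de+Ch^2\cdot 2^j\rho$, after which the maximal inequality (Lemma~\ref{lem:4.2}) with the Sobolev entropy (Lemma~\ref{lem:4.4}) reproduces the bound $C\sigma n^{-1/2}[2^i\de+(2^i\de)^{1-d/4}(2^j\rho)^{d/4}]$ of Theorem~\ref{thm:4.1}, again of order $C\lam_n\rho_0^2[2^i(1+z)+2^j]$ under \eqref{h1}. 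Adding and squaring gives $\|\sup\|_{\psi_2}^2\le C\lam_n^2\rho_0^4(2^{2i}(1+z)^2+2^{2j})$.

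Inserting this into the tail bound $\mathbb{P}(|X|>z)\le 2\exp(-z^2/\|X\|_{\psi_2}^2)$ applied to the energy inequality reproduces the exponent $-c(2^{2i}z^2+2^{2j})$ of Theorem~\ref{thm:4.1}. Summing over $i\ge 1$, $j\ge 0$ gives $\mathbb{P}(\|\hat u_h-u_0\|_n>\lam_n^{1/2}\rho_0(1+z))\le Ce^{-cz^2}$ for all $z\ge 1$, and Lemma~\ref{lem:4.6} yields the first estimate in \eqref{h2}; an analogous peeling on $|u_h|_{2,h}$ (driven by the $\lam_n|u_h|_{2,h}^2$ term on the left of the energy inequality) delivers the second. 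The main obstacle is the extra factor $\lam_n^{-d/8}$ from Lemma~\ref{lem:4.8}, which would otherwise spoil the scaling and weaken the exponent after peeling; it is absorbed precisely by the calibration \eqref{h1}, with $h=O(\lam_n^{1/4})$ ensuring that all interpolation and $\Pi_h$-smoothing errors remain at the dominant $\lam_n|u_0|_{H^2(\Om)}^2$ scale.
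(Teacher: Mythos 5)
Your proposal is correct in substance but takes a genuinely different route from the paper. The paper's proof is a short reduction: it reuses the Strang-type estimates \eqref{d4}--\eqref{d6} from the proof of Theorem \ref{thm:3.1} to bound $\lam_n^{1/2}|u_h|_{2,h}+\|\hat u_h-u_0\|_n$ by $C(1+h^2\lam_n^{-1/2})\|u_n-u_0\|_n+C(h^2+\lam_n^{1/2})(|u_n|_{H^2(\Om)}+|u_0|_{H^2(\Om)})$ plus the nonconforming noise supremum, and then simply invokes the already-established $\psi_2$ bounds for the continuous smoother (Theorem \ref{thm:4.1}) together with Lemma \ref{lem:4.8} and the reading of \eqref{h1} as $\sigma n^{-1/2}\le C\lam_n^{1/2+d/8}\rho_0$; since $\|\cdot\|_{\psi_2}$ is a norm, no new peeling is needed. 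You instead bypass $u_n$ entirely and rerun the peeling at the discrete level, comparing $u_h$ with $I_hu_0$ and splitting the noise functional into $(e,\Pi_hv_h)_n$ (handled by Lemmas \ref{lem:4.4}, \ref{lem:4.5} and the maximal inequality of Lemma \ref{lem:4.2}) and $(e,\hat v_h-\Pi_hv_h)_n$ (Lemma \ref{lem:4.8}, rescaled by homogeneity). This does work: the points you gloss over --- the empirical radius of the conforming class is $2^i\de+Ch^2\,2^j\rho$ rather than $2^i\de$, and the basic inequality carries $C\lam_n|u_0|^2_{H^2(\Om)}$ with a constant $C>1$ coming from the interpolant $I_hu_0$ --- are indeed absorbed under $h=O(\lam_n^{1/4})$ and the calibration (after enlarging $\de$ by a fixed factor, which Lemma \ref{lem:4.6} tolerates), so the exponent $-c(2^{2i}z^2+2^{2j})$ and hence the tail bound survive. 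In comparison, the paper's route is far shorter given Theorem \ref{thm:4.1}, while yours is self-contained at the discrete level and never needs the stochastic bounds for $u_n$, at the price of redoing the entropy/peeling machinery and the extra bookkeeping just mentioned; both arguments hinge on the same two new ingredients, the smoothing operator $\Pi_h$ of Lemma \ref{lem:3.2} and the discrete noise bound of Lemma \ref{lem:4.8}, and on using \eqref{h1} in the direction $\sigma n^{-1/2}\lam_n^{-d/8}\le C\lam_n^{1/2}\rho_0$.
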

\begin{proof} By \eqref{d4}-\eqref{d6} we have
\ben
& &\lam_n^{1/2}|u_h|_{H^2(\Om)}+\|\hat u_h-u_0\|_n\\
&\le&C(1+\frac{h^2}{\lam_n^{1/2}})\|u_n-u_0\|_n+C(h^2+\lam_n^{1/2})(|u_n|_{H^2(\Om)}+|u_0|_{H^2(\Om)})\\
&+&C\sup_{0\not= v_h\in V_h}\frac{|(e,\hat v_h-\Pi_h v_h)_n|}{\lj v_h\rj_h}.
\een
The theorem follows now from Theorem \ref{thm:4.1}, Lemma \ref{lem:4.8} and the assumption $\sigma n^{-1/2}\le C\lam_n^{1/2+d/8}\rho_0$.
\end{proof}

By \eqref{e3}, we know from Theorem \ref{thm:4.2} that
\ben
\mathbb{P}(\|\hat u_h-u_0\|_n\ge z)\le 2e^{-z^2/(C\lam_n\rho_0^2)},\ \ \ \ \forall z>0,
\een
that is, the probability density function of the random error $\|\hat u_h-u_0\|_n$ decays exponentially as $n\to\infty$.
\section{Numerical examples}

From Theorem \ref{thm:4.2} we know that the mesh size should be comparable with $\lam_n^{1/4}$. The smoothing parameter $\lam_n$ is usually determined by the cross-validation in the literature \cite{Wahba}. Here we propose a self-consistent algorithm to determine the parameter $\lam_n$
based on $\lam_n^{1/2+d/8}=\sigma n^{-1/2}(|u_0|_{H^2(\Om)}+\sigma n^{-1/2})^{-1}$ as indicated in Theorem \ref{thm:4.2}.
In the algorithm we estimate $|u_0|_{H^2(\Om)}$ by $|u_h|_{2,h}$ and $\sigma$ by $\|u_h-y\|_n$ since $\|u_0-y\|_n=\|e\|_n$ provides a good estimation of the variance by the law of large number.

\begin{algorithm}  {\sc(Self-consistent algorithm for finding $\lam_n$)} \label{j1}\\
$1^\circ$ Given an initial guess of $\lambda_{n,0}$; \\
$2^\circ$ For $k \geq 0$ and $\lambda_{n,k}$ known, compute $u_h$ with the parameter $\lambda_{n,k}$ over a quasi-uniform mesh of the mesh size $h=\lambda^{1/4}_{n,k}$;\\
$3^\circ$ Compute $\lambda^{1/2+d/8}_{n,k+1} = \|u_h-y\|_n n^{-1/2}(|u_h|_{2,h}+\|u_h-y\|_n n^{-1/2})^{-1}$.
\end{algorithm}

Now we show several examples to confirm our theoretical analysis. We will always take $\Omega=(0,1)\times(0,1)$ and $\{x_i\}_{i=1}^n$ being uniformly distributed over $\Omega$. We take $u_0=\sin(2\pi x^2+3\pi y) e^{\sqrt{x^3+y}}$, see Figure \ref{exact}. The finite element mesh of $\Om$ is construct by first dividing the domain into $h^{-1}\times h^{-1}$ uniform rectangles and then connecting the lower left and upper right vertices of each rectangle.

\begin{example} \label{numerical.1}
In this example we show that the choice of the smoothing parameter $\lambda_n$ by (\ref{h1}) is optimal. We set $e_i,~ i=1,2,\cdots, n$, being independent normal random variables with variance $\sigma=1$ and $n=2500$. Since $|u_0|_{H^2(\Omega)}\approx 200$, \eqref{h1} suggests the optimal choice of $\lambda_n\approx 3 \times 10^{-6}$. Figure \ref{example.1} shows that $\lambda_n=1 \times 10^{-6}$ is the best choice among 11 deferent choices $\lam_{n}=10^{-k}$, $k=1,2,\cdots,10$. Here we also choose the mesh size $h=\lam^{1/4}_{n}$ according to Theorem \ref{thm:4.2}. 
\end{example}

\begin{figure}[t]
\begin{center}
\includegraphics[width=6cm]{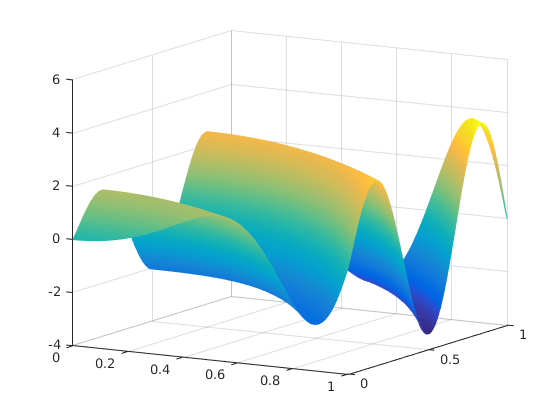} 
\end{center}
\caption{\em The surface plot of the exact solution $u_0$. }
\label{exact}
\end{figure}

\begin{figure}[t]
\begin{center}
\includegraphics[width=7cm]{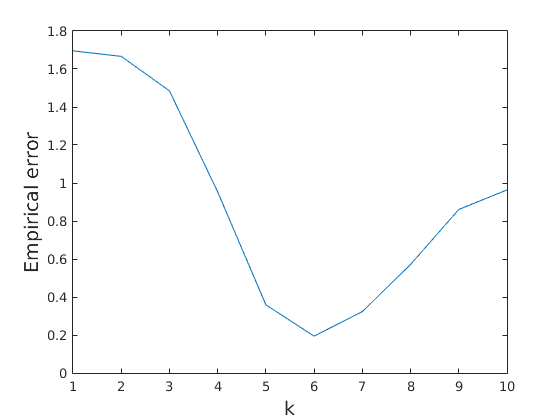}
\end{center}
\caption{\em  The empirical error $\|u_0-u_h\|_n$ for 11 different choices of $\lambda_n=10^{-k}, k=0,1,\cdots,10$. The mesh size $h=\lam_n^{1/4}$.}\label{example.1}
\end{figure}

\begin{example} \label{numerical.2}
In this example we show the empirical error $\|u_0-u_h\|_n$ depends linearly on $\lambda_n^{1/2}$ to confirm \eqref{h2}. We set $e_i,~ i=1,2,\cdots, n$, to be independent normal random variables with variance $\sigma=1$. We take $n$ varying from $2500$ to $9\times 10^4$. In this test we use the optimal $\lambda_n$ and take the mesh size $h=\lambda^{1/4}_n$. Figure \ref{example.2} (a) shows clearly the linear dependence of the empirical error on $\lam_n^{1/2}$. We also run the test for combined random errors, i.e., $e_i=\eta_i+\alpha_i$, where $\eta_i$ and $\alpha_i$ are independent normal random variables with variance $\sigma_1=1$ and $\sigma_2=10$. Figure \ref{example.2} (b) shows also the linear dependence of the empirical error on $\lam_n^{1/2}$. \end{example}

\begin{figure}[t]
\begin{center}
\begin{tabular}{cc}
\includegraphics[width=6cm]{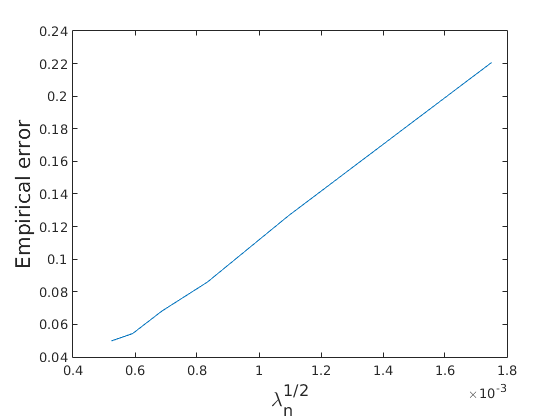} & 
\includegraphics[width=6cm]{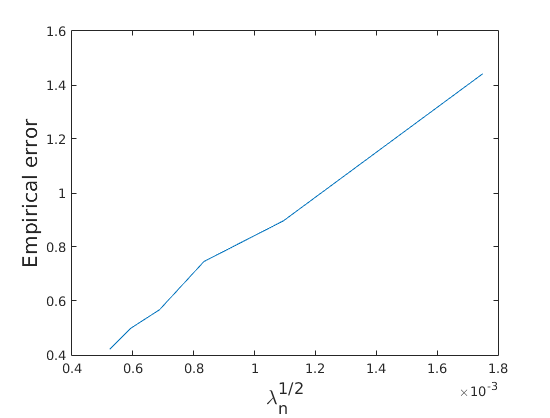} \\
(a) & (b)
\end{tabular}
\end{center}
\caption{\em  (a) The linear dependence of the empirical error $\|u_0-u_h\|_n$ on $\lambda_n^{1/2}$ for $\sigma=1$. (b) The linear dependence of the empirical error $\|u_0-u_h\|_n$ on $\lambda_n^{1/2}$ for combined random noises. }\label{example.2}
\end{figure}

\begin{example} \label{numerical.3}
We test the efficiency of the Algorithm \ref{j1} to estimate the smoothing parameter $\lambda_n$. We will show two experiments of different noise levels. In the first test we set $e_i,~ i=1,2,\cdots, n$, being independent normal random variables with variance $\sigma=1$ and $n=2500$. Figure \ref{example.3} (a) and (b) show clearly that the sequence of $\{\lambda_{n,k}\}$ generated by Algorithm \ref{j1} converges. $\lambda_{n, 16}=4.12\times 10^{-6}$ agrees with the optimal choice $3\times 10^{-6}$ given by \eqref{h1}. Furthermore, $\|u_h-y\|_n=0.99$ provides a good estimate of the variance $\sigma$.   

We now consider the combined random noise. Let $e_i=\eta_i+\alpha_i,~ i=1,2,\cdots, n$, where $\eta_i$ and $\alpha_i$ are independent normal random variables with variance $\sigma_1=1$ and $\sigma_2=10$. It is obvious that $\sigma^2=\E e_i^2=\sigma_1^2+\sigma_2^2=101$. Let $n=4\times 10^4$.
Again Figure \ref{example.3} (c) and (d) show the sequence $\{\lambda_{n,k}\}$ generated by Algorithm \ref{j1} converges. Now $\lambda_{n, 19}=2.16\times 10^{-5}$ which fits well the optimal choice $1.03\times 10^{-5}$ given by \eqref{h1}. Also $\|u_h-y\|_n=10.07$ gives a good estimate of the variance $\sigma$. 
\end{example}

\begin{figure}[t]
\begin{center}
\begin{tabular}{cc}
\includegraphics[width=6cm]{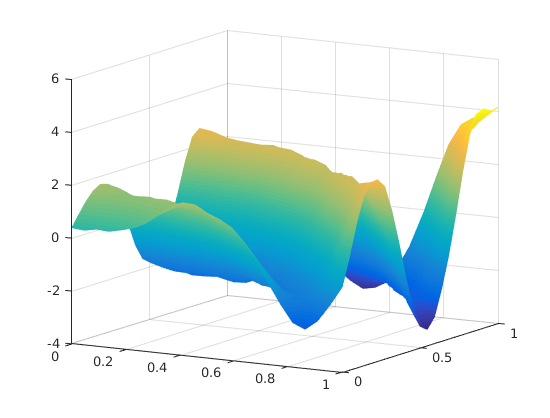} &
\includegraphics[width=6cm]{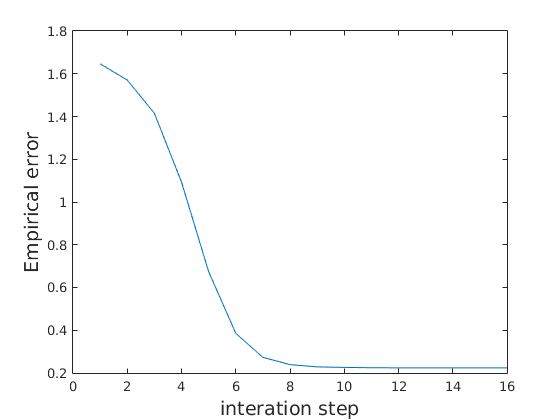} \\
(a) & (b) \\
\includegraphics[width=6cm]{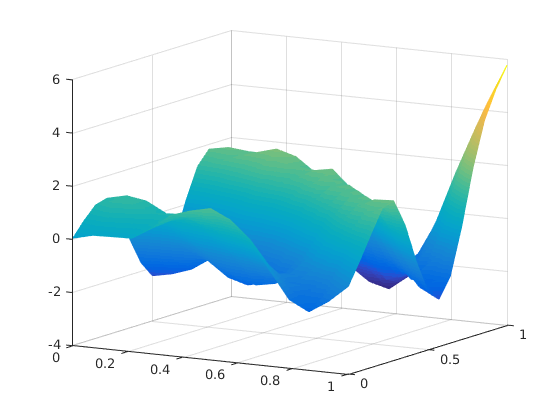} &
\includegraphics[width=6cm]{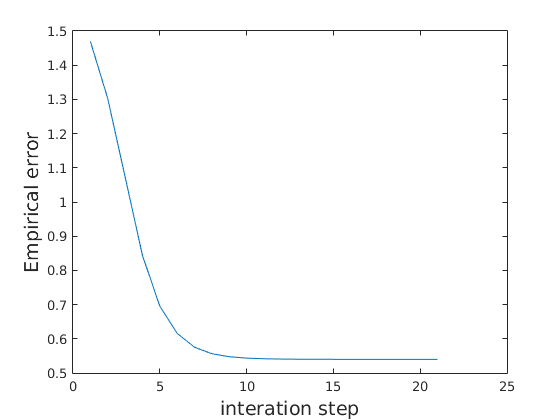} \\
(c) & (d) \\
\end{tabular}
\end{center}
\caption{\em (a) The solution $u_h$ at the end of iteration for $\sigma=1$. (b) The empirical error $\|u_0-u_h\|_n$ of each iteration for $\sigma=1$. (c) The solution $u_h$ at the end of iteration for the combined random error $e_i=\eta_i+\alpha_i$. (d) The empirical error $\|u_0-u_h\|_n$ of each iteration for the combined random error $e_i=\eta_i+\alpha_i$.}
\label{example.3}
\end{figure}

\section{Appendix: Proof of Lemma \ref{lem:3.2} when $d=3$} \label{App:AppendixA}
The proof is very similar to the proof for 2D case in section 3. We will construct $\Pi_hv_h$ by using the three dimensional $C^1$ element of Zhang constructed in \cite{Zhang} which simplifies an earlier construction of Zenisek \cite{Zen}. For any tetrahedron $K\in\cM_h$, the $C^1-P_9$ element in \cite{Zhang} is a triple $(K,P_K,\Lambda_K)$, where $P_K=P_9(K)$ and the set of degrees of freedom $\Lambda_K$ consists of the following $220$ functionals: for any $p\in C^2(K)$, 
\begin{description}
\item[$1^\circ$] The nodal values of $p(a_i), Dp(a_i)(a_j-a_i), D^2p(a_i)(a_j-a_i,a_k-a_i), D^3(a_i)(a_j-a_i,a_k-a_i,a_l-a_i), D^4p(a_i)(a_j-a_i,a_k-a_i,a_l-a_i,a_n-a_i), 1\le i\le 4,1\le j\le k\le l\le n\le 4, i\not\in\{j,k,l,n\}$, where $\{a_i\}^4_{i=1}$ are the vertices of $K$; (120 functionals) 
\item[$2^\circ$] The 2 first order normal derivatives $\pa_{\nu_k}p(a_{ij})$ and 3 second order normal derivatives $\pa^2_{\nu_k\nu_l}p(b_{ij}), \pa^2_{\nu_k\nu_l}p(c_{ij})$ on the edge with vertices $a_i,a_j$, $1\le i\not= j\le 4$, where $\nu_k, k=1,2,$ are unit vectors perpendicular to the edge, and $a_{ij}=(a_i+a_j)/2$, $b_{ij}=(2a_i+a_j)/3$, $c_{ij}=(a_i+2a_j)/3$; (48 functionals) 
\item[$3^\circ$]The nodal value $p(a_{ijk})$ and 6 normal derivatives $\pa_\nu p(a_{ijk}^n)$ on the face with vertices $a_i,a_j,a_k$, $1\le i,j,k\le 4, i\not=j, j\not=k,k\not=i$, $n=1,2,\cdots,6$, where $a_{ijk}$ is the barycenter of the face and $a_{ijk}^1=(2a_i+a_j+a_k)/4,
a_{ijk}^2=(a_i+2a_j+a_k)/4,a_{ijk}^3=(a_i+a_j+2a_k)/4,a_{ijk}^4=(4a_i+a_j+a_k)/6,a_{ijk}^5=(a_i+4a_j+a_k)/6,a_{ijk}^6=(a_i+a_j+4a_k)/6$; (24 functionals) 
\item[$4^\circ$] The nodal values $p(d_i)$, $1\le i\le 4$, at internal points $d_1=(2a_1+a_2+a_3+a_4)/5,d_2=(a_1+2a_2+a_3+a_4)/5,d_3=(a_1+a_2+2a_3+a_4)/5,d_4=(a_1+a_2+a_3+2a_4)/5$. (4 functionals)
\end{description}
Let $X_h$
be the finite element space 
\ben
X_h=\{v_h: v_h|_K\in P_9(K), \forall K\in\cM_h, f(v_h|_{K_1})=f(v_h|_{K_2}),\forall f\in \Lambda_{K_1}\cap\Lambda_{K_2}\}.
\een
It is known that $X_h\subset H^2(\Om)$. We define the operator $\Pi_h$ as follows. For any $v_h\in V_h$, $w_h:=\Pi_h v_h\in X_h$ such that for any $K\in\cM_h$, $w_h|_K\in P_9(K)$, for the degrees of freedom at vertices $a_i$, $1\le i\le 4$,
\beq
\pa^\alpha (w_h|_K)(a_i)=\frac 1{N(a_{i})}\sum_{K'\in\cM_h(a_{i})} \pa^\alpha(v_h|_{K'})(a_{i}),\ \ |\alpha|\le 4,\label{m1}
\eeq
for the degrees of freedom on the edge with vertices $a_i,a_j$, $1\le i\not=j\le 4$,
\be
& &\pa_{\nu_k} (w_h|_K)(a_{ij})=\frac 1{N(a_{ij})}\sum_{K'\in\cM_h(a_{ij})} \pa_{\nu_k}(v_h|_{K'})(a_{ij}), \ \ k=1,2,\label{m2}\\
& &\pa_{\nu_k\nu_l} (w_h|_K)(b_{ij})=\frac 1{N(b_{ij})}\sum_{K'\in\cM_h(b_{ij})} \pa_{\nu_k\nu_l}(v_h|_{K'})(b_{ij}),\ \ k,l=1,2,\label{m3}\\
& &\pa_{\nu_k\nu_l} (w_h|_K)(c_{ij})=\frac 1{N(c_{ij})}\sum_{K'\in\cM_h(c_{ij})} \pa_{\nu_k\nu_l}(v_h|_{K'})(c_{ij}),\ \ k,l=1,2,\label{m4}
\ee
for the degrees of freedom on the faces with vertices $a_i,a_j,a_k$, $1\le i,j,k\le 4,i\not=j, j\not=k, k\not=i$,
\be
& &(w_h|_K)(a_{ijk})=\frac 1{N(a_{ijk})}\sum_{K'\in\cM_h(a_{ijk})}(v_h|_{K'})(a_{ijk}),\label{m5}\\
& &\pa_\nu(w_h|_K)(a_{ijk}^n)=\frac 1{N(a_{ijk}^n)}\sum_{K'\in\cM_h(a_{ijk}^n)}\pa_\nu(v_h|_{K'})(a_{ijk}^n), \ \ n=1,2\,\cdots, 6,\label{m6}\
\ee
and finally for the degrees of freedom at the interior points $d_i, 1\le i\le 4$,
\beq
(w_h|_K)(d_i)=(v_h|_K)(d_i).\label{m7}
\eeq
To show the desired estimate \eqref{b7} in 3D we use the $C^0$-$P_9$ element in \cite{Zhang} which is a triple $(K,P_K,\Theta_K)$, where $P_K=P_9(K)$ and the set of degrees of freedom $\Theta_K$ is defined by replacing some of the degrees of freedom of the $C^1-P_9$ element $\Lam_K$ as follows:
\begin{description}
\item[$1^\circ$] For the edge with vertices $a_i,a_j$, $1\le i\not= j\le 4$, replace the 2 edge first order normal derivatives by $Dp(a_{ij})(a_k-a_{ij}), Dp(a_{ij})(a_l-a_{ij})$ and denote the corresponding nodal basis functions $p_{ij}^k(x),p_{ij}^l(x)$, where $a_k,a_l$ are the other 2 vertices of $K$ other than $a_i,a_j$;
\item[$2^\circ$] For the edge with vertices $a_i,a_j$, $1\le i\not= j\le 4$, replace the 3 edge second order normal derivatives by $D^2p(b_{ij})(a_k-b_{ij},a_l-b_{ij}), D^2p(c_{ij})(a_k-b_{ij},a_l-b _{ij})$
and denote the corresponding nodal basis functions $p_{ij}^{kl}(x),q_{ij}^{kl}(x)$, where $a_k,a_l$ are the other 2 vertices of $K$ other than $a_i,a_j$;
\item[$3^\circ$] For the face with vertices $a_i,a_j,a_k$, $1\le i,j,k\le 4,i\not=j, j\not=k, k\not=i$, replace the face normal derivatives by $Dp(a_{ijk}^n)(a_l-a_{ijk}^n)$ and denote the corresponding nodal basis functions $p_{ijk}^n(x)$, where $a_l$ is the vertex of $K$ other than $a_i,a_j,a_k$, $n=1,2\cdots,6$.
\end{description}

A regular family of this $C^0-P_9$ element is affine-equivalent. For any $v_h\in V_h$, we also define an operator $q_h:=\Lambda_h v_h$ in a similar way as the definition of $\Pi_h$ by replacing the average normal derivatives in \eqref{m2}-\eqref{m4} and \eqref{m6} by the corresponding directional derivatives in the definition of degrees of freedom for the $C^0-P_9$ element. By the same argument as that in the proof of 2D case in section 3 we have
\beq\label{m8}
|v_h-q_h|_{H^m(K)}\le Ch^{2-m}\left(\sum_{K'\in\cM_h(K)}|v_h|_{H^2(K')}^2\right)^{1/2},\ \ m=0,1,2.
\eeq

Next we expend $q_h-w_h\in P_9(K)$ in terms of the nodal basis functions of the $C^0-P_9$ element.  From the definition of the $C^1-P_9$ and $C^0-P_9$ elements, we have $q_h-w_h=\phi_e+\phi_f$ in $K$, where the edge part of the function $q_h-w_h$ is
\ben
\phi_e(x)&=&\sum_{\stackrel{1\le i\not=j\le 4}{\{k,l\}\in \{1,2,3,4\}\backslash\{i,j\},k\not=l}}\Big[D(q_h|_K-w_h|_K)(a_{ij})(a_k-a_{ij})p_{ij}^k(x)\\
& &\hspace{3cm} +\ D(q_h|_K-w_h|_K)(a_{ij})(a_l-a_{ij})p_{ij}^l(x)\Big]\\
& &+\sum_{\stackrel{1\le i\not=j\le 4}{\{k,l\}\in \{1,2,3,4\}\backslash\{i,j\},k\le l}}\Big[D^2(q_h|_K-w_h|_K)(b_{ij})(a_k-b_{ij},a_l-b_{ij})p_{ij}^{kl}(x)\\
& &\hspace{3cm} +\ D^2(q_h|_K-w_h|_K)(c_{ij})(a_k-c_{ij},a_l-c_{ij})q_{ij}^{kl}(x)\Big],
\een
and the face part of the function $q_h-w_h$ is
\ben
\phi_f(x)=\sum_{\stackrel{1\le i,j,k\le 4,i\not= j,j\not= k,k\not=i}{\{l\}\in \{1,2,3,4\}\backslash\{i,j,k\}}}\sum^6_{n=1}D(q_h|_K-w_h|_K)(a_{ijk}^n)(a_l-a_{ijk}^n)p_{ijk}^n(x).
\een
Since the tangential derivatives of $q_h-w_h$ along the edges vanish, we obtain by the same argument as that in the proof of 2D case
in section 3 that
\beq\label{m9}
|\phi_e|_{H^m(K)}\le Ch^{2-m}\left(\sum_{K'\in\cM_h(K)}|v_h|_{H^2(K')}^2\right)^{1/2},\ \ m=0,1,2.
\eeq

On any face $F$ of $K$, $q_h-w_h-\phi_e\in P_9(F)$ and its nodal values at 3 vertices up to 4th order derivatives vanish, its first order normal derivative at the midpoint and two second order normal derivatives at two internal trisection points on 3 edges vanish, and the nodal value at the barycenter also vanishes. This implies $q_h-w_h-\phi_e=0$ on any face of the element $K$.
Let $\tau_{ijk}^n$ be the tangential unit vector on the face of vertices $a_i,a_j,a_k$ such that 
\ben
a_l-a_{ijk}^n=[(a_l-a_{ijk}^n)\cdot\tau_{ijk}^n]\tau^n_{ijk}+[(a_l-a_{ijk}^n)\cdot\nu]\nu.
\een
Now by \eqref{m4}, \eqref{m8}-\eqref{m9}, and the inverse estimate we have
\be
& &|D(q_h|_K-w_h|_K)(a_{ijk}^n)(a_l-a_{ijk}^n)|\nn\\
&\le&|[(a_l-a_{ijk}^n)\cdot \tau_{ijk}^n]D\phi_e(a_{ijk}^n)\tau_{ijk}^n|
+|[(a_l-a_{ijk}^n)\cdot\nu]D(q_h|_K-w_h|_K)(a_{ijk}^n)\nu|\nn\\
&\le&C h^{1/2}\left(\sum_{K'\in\cM_h(K)}|v_h|_{h^2(K')}^2\right)^{1/2}.\label{m10}
\ee
Since a regular family of $C^0-P_9$ element is affine-equivalent, we have $|p_{ijk}^n|_{H^m(K)}\le Ch^{3/2-m}$, $m=0,1,2$.
Therefore, by \eqref{m10} we obtain
\beq\label{m11}
|\phi_f|_{H^m(K)}\le Ch^{2-m}\left(\sum_{K'\in\cM_h(K)}|v_h|_{H^2(K')}^2\right)^{1/2},\ \ m=0,1,2.
\eeq
Combining \eqref{m8}, \eqref{m9}, \eqref{m11} yields the desired estimate \eqref{b7} in 3D since $v_h-w_h=(v_h-q_h)+\phi_e+\phi_f$ in $K$.
The estimate \eqref{b5} can be proved in the same way as the proof for the 2D case in section 3.
This completes the proof. $\Box$

\end{document}